\documentclass[11pt]{amsproc}

\usepackage{amsmath}
\usepackage{amsthm}
\usepackage{amssymb}
\usepackage{mathtools}
\usepackage{mathrsfs}
\usepackage{bm}
\usepackage{tikz}
\usepackage{tikz-cd}
\usepackage[most]{tcolorbox}
\usepackage{graphicx}
\usepackage{xcolor}
\usepackage{fullpage}
\usepackage[hypertexnames=false]{hyperref}
\usepackage{xurl}
\usepackage{thmtools}
\usepackage[capitalize, nameinlink]{cleveref}
\usepackage{float}
\usepackage{datetime}
\usepackage{aligned-overset}
\usepackage{ulem}
\usepackage{enumitem}
\usepackage{url}
\usepackage{physics}
\usepackage{stmaryrd}
\usepackage{listings}
\usepackage[abbrev]{amsrefs}

\newtheorem{theoremcounter}{Theorem Counter}[section]

\theoremstyle{remark}
\newtheorem{remark}[theoremcounter]{Remark}

\theoremstyle{definition}
\newtheorem{definition}[theoremcounter]{Definition}

\theoremstyle{plain}
\newtheorem{lemma}[theoremcounter]{Lemma}
\newtheorem{proposition}[theoremcounter]{Proposition}

\newtheorem{theorem}[theoremcounter]{Theorem}

\numberwithin{equation}{section}

\newcommand{\bbC}{\mathbb{C}}
\newcommand{\bbN}{\mathbb{N}}
\newcommand{\bbZ}{\mathbb{Z}}

\newcommand{\bbQ}{\mathbb{Q}}

\newcommand{\calA}{\mathcal{A}}

\newcommand{\stirlingone}[2]{\genfrac{[}{]}{0pt}{}{#1}{#2}}

\allowdisplaybreaks[4]
\DeclareMathOperator{\den}{den}

\newcommand{\stirlingII}[2]{\genfrac{\{}{\}}{0pt}{}{#1}{#2}}

\begin{document}

\title[]{On finite analogues of Dobi\'{n}ski's formula and of Euler's constant via Gregory polynomials}

\author[]{Toshiki Matsusaka}
\address{Faculty of Mathematics, Kyushu University, Motooka 744, Nishi-ku, Fukuoka 819-0395, Japan}
\email{matsusaka@math.kyushu-u.ac.jp}

\author[]{Taichi Miyazaki}
\address{Department of Mathematics, Kyushu University, Motooka 744, Nishi-ku, Fukuoka 819-0395, Japan}
\email{miyazaki.taichi.478@s.kyushu-u.ac.jp}

\author[]{Shunta Yara}
\address{Department of Mathematics, Kyushu University, Motooka 744, Nishi-ku, Fukuoka 819-0395, Japan}
\email{yara.shunta.444@s.kyushu-u.ac.jp}


\maketitle

\begin{abstract}
	We study a finite analogue of Dobi\'{n}ski's formula, which is related to the Napier constant $e$, and its Bessel-type generalizations. Furthermore, using Gregory polynomials, we extend the results of Kaneko--Matsusaka--Seki on finite analogues of Euler's constant, and compare them with the Wilson-type analogue $\gamma_\calA^\mathrm{W}$.
\end{abstract}

\section{Introduction}

Let $\calA$ be the quotient ring defined by
\[
	\calA\coloneqq \bigg(\prod_{p:\text{ prime}}\bbZ/p\bbZ\bigg)\Big/\bigg(\bigoplus_{p:\text{ prime}}\bbZ/p\bbZ \bigg).
\]
The diagonal embedding of the rational field $\mathbb{Q}$ into $\mathcal{A}$ endows $\calA$ with a natural structure of a $\bbQ$-algebra. For a rational number, its $p$-component is not well-defined for the (finitely many) primes $p$ dividing its denominator. In such cases, we assign an arbitrary value (for instance, $0$) to the $p$-component. Kaneko and Zagier~\cite{KanekoZagier} introduced \textit{finite multiple zeta values} as analogues of multiple zeta values in $\calA$. For a tuple of positive integers $(k_1, \dots, k_r)$, they are defined by
\[
	\zeta_\calA(k_1, \dots, k_r) \coloneqq \left(\sum_{0 < m_1 < \cdots < m_r < p} \frac{1}{m_1^{k_1} \cdots m_r^{k_r}} \bmod{p} \right)_p \in \calA.
\]
These values have been systematically studied from the viewpoint of their analogy with multiple zeta values, and many structural similarities between the two theories have been revealed.

Moreover, beyond finite multiple zeta values, there have been various attempts to construct $\calA$-analogues of arithmetically interesting quantities, leading to what may be called an emerging ``arithmetic of $\calA$". Examples include $\calA$-analogues of algebraic numbers~\cites{Rosen2020, RTTY2024}, constructions of transcendental numbers~\cites{AnzawaFunakura2024, LucaZudilin2025, LucaZudilin2025-pre}, and an $\calA$-analogue of the Euler constant~\cite{KMS2025}. On the other hand, even proving that such elements are nonzero is in general a difficult problem. In fact, at present, it is not known whether there exists a tuple $(k_1, \dots, k_r)$ such that $\zeta_\calA(k_1, \dots, k_r) \neq 0$. In~\cite{KanekoZagier}, the element defined using \textit{Fermat quotients},
\begin{align}\label{def:log-A}
	\log_\calA(x) \coloneqq \left(\frac{x^{p-1} -1}{p} \bmod{p}\right)_p \in \calA \quad (x \in \bbQ^\times)
\end{align}
is regarded as an $\calA$-analogue of the logarithm, since it satisfies the functional equation $\log_\calA(xy) = \log_\calA(x) + \log_\calA(y)$. In this case, assuming the ABC conjecture, one can show that $\log_\calA(x) \neq 0$ for $x \in \bbQ^\times \setminus \{-1, 1\}$ (due to Silverman~\cite{Silverman1988}).

The main objectives of this article are twofold. The first is to establish an $\calA$-analogue of Dobi\'{n}ski's formula related to the Napier constant $e$, thereby providing a candidate for an $\calA$-analogue of $e$. We further extend this result to a more general Bessel-type setting. The second is to extend the $\calA$-analogues of the Euler constant studied by the first author together with Kaneko and Seki~\cite{KMS2025}, using Gregory polynomials.

\textit{Dobi\'{n}ski's formula} is the identity
\[
	D(n)\coloneqq \sum_{k = 0}^\infty\frac{k^n}{k!} = b(n)e,
\]
which was considered in his paper (1877)~\cite{dobinski1877summirung}. Here, $(b(n))_{\ge 0}$ is the sequence of integers defined by $b(0) = 1$ together with the recurrence relation
\[
	b(n + 1) = \sum_{k = 0}^n\binom{n}{k}b(k) \quad (n\ge 0).
\]
This yields $(b(n))_{n\ge 0} = (1,1,2,5,15,52,203,877,\ldots)$, which is known as \textit{Bell numbers}, named after the work of Bell~\cite{Bell1938} (OEIS~\cite[\href{https://oeis.org/A000110}{A000110}]{OEIS}). Motivated by Dobi\'nski's formula, we define 
\[
	D_{\calA}(n) 
	\coloneqq
	\left(\sum_{k=0}^{p-1} \frac{k^n}{k!} \bmod p \right)_p
	\in \calA.
\]
and introduce a finite analogue of the constant $e$ by setting $e_\calA \coloneqq D_\calA(0)$. Then, the following $\calA$-analogue of Dobi\'{n}ski's formula holds.

\begin{theorem}\label{analogue of Dobinski}
	For any integer $n\ge 0$, we have
	\[
		D_\calA(n) = b(n)e_\calA + g(n),
	\]
	where the sequence $(g(n))_{n\ge 0} = (0,1,1,3,9,31,121,523,\ldots)$ is given by $g(0) = 0, g(1) = 1$, and by the same recurrence relation as the Bell numbers, that is, 
	\[
		g(n + 1) = \sum_{k = 0}^n\binom{n}{k}g(k)\quad (n\ge 1),
	\]
  $($see also OEIS~\cite[\href{https://oeis.org/A040027}{A040027}]{OEIS}$)$.
\end{theorem}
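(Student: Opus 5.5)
The plan is to prove the statement by strong induction on $n$. The key input is a recurrence for $D_\calA(n)$ obtained by shifting the summation index, which reproduces the Bell-type recurrence. The only correction is a boundary term at $n=0$, and Wilson's theorem evaluates it. Throughout, all $k!$ with $0\le k\le p-1$ are units mod $p$, so every sum is well-defined in $\bbZ/p\bbZ$.

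\textbf{Step 1 (index shift).} For $n\ge 0$ and a fixed prime $p$, the $k=0$ term of $\sum_{k=0}^{p-1} k^{n+1}/k!$ vanishes. Writing $k^{n+1}/k! = k^n/(k-1)!$ and setting $j=k-1$, we get
\[
	\sum_{k=0}^{p-1}\frac{k^{n+1}}{k!} = \sum_{j=0}^{p-2}\frac{(j+1)^n}{j!}
	= \sum_{j=0}^{p-1}\frac{(j+1)^n}{j!} - \frac{p^n}{(p-1)!}.
\]
Expanding $(j+1)^n$ by the binomial theorem turns the full sum into $\sum_{m=0}^n\binom{n}{m}\sum_{j=0}^{p-1} j^m/j!$.

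\textbf{Step 2 (the boundary term).} For $n\ge 1$, the term $p^n/(p-1)!$ is $\equiv 0 \pmod p$. For $n=0$ it equals $1/(p-1)!\equiv -1 \pmod p$ by Wilson's theorem. Passing to $\calA$, this gives
\[
	D_\calA(n+1) = \sum_{m=0}^n \binom{n}{m} D_\calA(m) \quad (n\ge 1),
	\qquad D_\calA(1) = D_\calA(0) + 1 = e_\calA + 1.
\]

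\textbf{Step 3 (induction).} The base cases are $D_\calA(0)=e_\calA = b(0)e_\calA + g(0)$ and $D_\calA(1)=e_\calA+1=b(1)e_\calA+g(1)$. For $n\ge 1$, assume $D_\calA(m)=b(m)e_\calA+g(m)$ for all $m\le n$. The recurrence from Step 2 is $\bbQ$-linear, and both $b$ and $g$ satisfy $x(n+1)=\sum_{m=0}^n\binom{n}{m}x(m)$ for $n\ge 1$. Hence
\[
	D_\calA(n+1)=b(n+1)e_\calA+g(n+1).
\]

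The only point needing care is Step 2: the discrepancy between the recurrences occurs exactly at $n=0$. This is why $g$ has initial values $(0,1)$ and obeys the Bell recurrence only from $n\ge1$. Wilson's theorem supplies the value $+1$ of that discrepancy, rather than leaving the unknown quantity $1/(p-1)!$.
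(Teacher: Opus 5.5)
Your proof is correct and follows the paper's argument. The paper proves the general $(r,x)$ version via the index-shift \cref{lem:DrN-rec}, which has boundary term $-N^n x^N/((N-1)!)^r$. At $N=p$ it evaluates this term by Wilson's theorem as $(-1)^{r-1}x\,\delta_{n,0}$ and then inducts using linearity of the recurrence; your argument is exactly the specialization $r=1$, $x=1$.
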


Next, we further generalize Dobi\'nski's formula by introducing
\[
	D_r(n)\coloneqq \sum_{k = 0}^\infty\frac{k^n}{(k!)^r}
\]
for $r \ge 1$. By definition, $D_1(n) = D(n)$. Moreover, since $D_2(0) = I_0(2)$, where $I_0$ denotes the $I$-Bessel function, this may be regarded as a Bessel-type generalization. Numerical experiments suggest the following. For $r=2$, one has
\[
	D_2(n) = b_{2,0}(n) D_2(0) + b_{2,1}(n) D_2(1).
\]
The coefficients are given as follows:
\[
	\begin{array}{c|ccccccccc}
		n & 0 & 1 & 2 & 3 & 4 & 5 & 6 & 7 & 8 \\ \hline
		b_{2,0}(n) & 1 & 0 & 1 & 1 & 2 & 5 & 13 & 36 & 109 \\ \hline
		b_{2,1}(n) & 0 & 1 & 0 & 1 & 2 & 4 & 10 & 29 & 90
	\end{array}
\]
This observation appears in the OEIS~\cite[\href{https://oeis.org/A086880}{A086880} and \href{https://oeis.org/A246118}{A246118}]{OEIS}. 
In general, we prove the following.

\begin{theorem}\label{Dobinski with r}
	For any integers $r\ge 1$ and $n\ge 0$, we have
	\begin{align}\label{eq:Thm12-first}
		D_r(n) = \sum_{j = 0}^{r - 1}b_{r,j}(n)D_r(j),
	\end{align}
	where the coefficients $b_{r,j}(n)$ are defined by $b_{r,j}(n) = \delta_{j,n}$ for $0\le n\le r - 1$ and the recurrence relation
	\begin{align}\label{def of brj}
		b_{r,j}(n + r) = \sum_{k = 0}^n\binom{n}{k}b_{r,j}(k)\quad (n\ge 0).
	\end{align}
	In addition, $D_r(0),\ldots, D_r(r - 1)$ are linearly independent over $\overline{\bbQ}$.
\end{theorem}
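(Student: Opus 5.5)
The plan has two parts: an algebraic reduction that gives the recurrence, and a transcendence argument that gives the linear independence.

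\textbf{Recurrence.} For $n\ge 0$, the terms with $k<r$ in $D_r(n+r)$ vanish, since $k^r/(k!)^r=0$ for $0\le k\le r-1$. For $k\ge r$ we have $k^r/(k!)^r = 1/((k-1)!)^r$. Substituting $k=m+1$ then gives
\[
	D_r(n+r)=\sum_{k\ge 1}\frac{k^{n}}{((k-1)!)^r}=\sum_{m\ge 0}\frac{(m+1)^n}{(m!)^r}=\sum_{j=0}^n\binom{n}{j}D_r(j),
\]
by the binomial theorem; absolute convergence justifies the interchange. Thus the sequence $n\mapsto D_r(n)$ satisfies the recurrence \eqref{def of brj}. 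So does $n\mapsto\sum_{j=0}^{r-1}b_{r,j}(n)D_r(j)$, because \eqref{def of brj} is linear. The two sequences agree for $0\le n\le r-1$, since $b_{r,j}(n)=\delta_{j,n}$ there. The recurrence expresses the value at $n+r$ in terms of values at indices at most $n<n+r$, so induction on $n$ gives \eqref{eq:Thm12-first}.

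\textbf{Linear independence.} Set $\theta = x\,d/dx$ and consider the generalized hypergeometric function
\[
	f(x)=\sum_{k\ge0}\frac{x^k}{(k!)^r}={}_0F_{r-1}(;1,\dots,1;x).
\]
Then $D_r(j)=(\theta^j f)(1)$. Since $\theta^r f = x f$, the span of $\theta^j f$ for $0\le j\le r-1$ is the full solution vector of this order-$r$ differential equation. I would invoke the Siegel--Shidlovskii theorem for $E$-functions, in the form for hypergeometric $E$-functions established by Beukers–Brownawell–Heckman. Applied to $f(z^r)=\sum z^{rk}/(k!)^r$, an $E$-function, and its derivatives, it gives that $f(x^r),\theta f,\dots,\theta^{r-1}f$ are algebraically independent over $\overline{\bbQ}(x)$, provided the differential Galois group is large. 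For ${}_0F_{r-1}$ with all lower parameters equal to $1$, the differential operator is irreducible: no parameter differences are integers in the relevant sense, because there are no upper parameters. Katz's classification then gives Galois group $SL_r$ or $Sp_r$ for the Kummer-pulled-back operator. Even without invoking Katz directly, irreducibility suffices: it shows that the functions $\theta^j f$ are linearly independent over $\overline{\bbQ}(x)$. Linear independence over $\overline{\bbQ}(x)$ passes to linear independence of the values over $\overline{\bbQ}$ at any algebraic point $x_0\neq0$ that is not a singularity. This follows from the linear Siegel--Shidlovskii statement, and in refined form from Beukers' theorem (2006): every $\overline{\bbQ}$-linear relation among the values comes from specializing a $\overline{\bbQ}(x)$-linear relation among the functions. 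The point $x=1$ is regular, since the only singularities are $0$ and $\infty$.

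\textbf{Main obstacle.} The expected hard step is verifying that $f,\theta f,\dots,\theta^{r-1}f$ are linearly independent over $\overline{\bbQ}(x)$, which is equivalent to irreducibility of $\theta^r - x$. I would argue as follows. A nontrivial linear relation would give a factor of lower order. At $x=0$ the operator $\theta^r-x$ has the single exponent $0$ with multiplicity $r$, so its local monodromy is one Jordan block. At $\infty$ it is irregular with slopes $1/r$, all in one block, so any factor of order $s<r$ would have slopes with denominators dividing $s$. This is impossible, because the Newton polygon at $\infty$ has a single slope $1/r$ with $r$ in lowest terms.
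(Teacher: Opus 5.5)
Your proposal is correct and shares the paper's architecture. The recurrence comes from the shift $k\mapsto k+1$, the binomial theorem and induction; the paper runs this on the truncations $D_r^{(N)}(n;x)$ and lets $N\to\infty$. Independence comes from Beukers' refined Siegel--Shidlovskii theorem applied to $E(z)=\sum_n z^{rn}/(n!)^r$ and $E_j=\vartheta^jE$ at $z=1$, where $E_j(1)=r^jD_r(j)$.

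Where you genuinely differ is the key lemma that $E_0,\dots,E_{r-1}$ are linearly independent over $\bbC(z)$. The paper proves it by hand. From a relation $\sum_j p_jE_j=0$, the coefficient of $z^{rN+m_0}$ gives a polynomial in $N$ that vanishes for all large $N$, and a degree count shows its top term cannot cancel. You instead show that $\theta^r-x$ is irreducible: its Newton polygon at $\infty$ is one segment of slope $1/r$ and length $r$, so a right factor of order $0<s<r$ would need a lattice segment of slope $1/r$ and length $s$. Irreducibility then forbids a lower-order equation for $f$. Your viewpoint explains why independence holds and carries over to other hypergeometric $E$-functions whose operators are known to be irreducible. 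It does, however, rely on multiplicativity of Newton polygons. The paper's argument is elementary and is carried out directly in the variable where Beukers' theorem is applied.

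That last point is the one step you leave implicit. Beukers' theorem needs independence of $E_j(z)=r^j(\theta^jf)(z^r)$ over $\overline{\bbQ}(z)$, not of $\theta^jf$ over $\overline{\bbQ}(x)$. The slope argument does not survive the substitution $x=z^r$, because the slope at $\infty$ becomes the integer $1$. The transfer is nonetheless easy: after clearing denominators, write $P_j(z)=\sum_{i=0}^{r-1}z^iP_{j,i}(z^r)$ and split any relation according to exponents modulo $r$.

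Two small fixes. In the recurrence, only the $k=0$ term vanishes (because $n+r\ge 1$); the terms with $1\le k<r$ do not, since $k^r/(k!)^r=1/((k-1)!)^r$ for every $k\ge1$. Your displayed sum over $k\ge1$ is right, but the sentence before it is not. Also, independence of $f,\dots,\theta^{r-1}f$ is implied by irreducibility, not equivalent to it. The remarks on Katz and on algebraic independence are not needed for the statement.
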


Furthermore, we introduce an $\calA$-analogue of \cref{Dobinski with r}. For any integer $r \ge 1$, we define $D_{r,\calA}(n)$ by
\begin{align}\label{def:DrA}
	D_{r,\calA}(n)
	\coloneqq
	\left(\sum_{k=0}^{p-1} \frac{k^n}{(k!)^r} \bmod p\right)_{p}
	\in \calA.
\end{align}
Then the following theorem holds. In particular, \cref{analogue of Dobinski} corresponds to the special case $r=1$ of this theorem.

\begin{theorem}\label{analogue of Dobinski with r}
	For any integers $r \ge 1$ and $n\ge 0$, we have  
	\begin{align}
		D_{r,\calA}(n) = \sum_{j = 0}^{r - 1}b_{r,j}(n)D_{r,\calA}(j) + g_r(n),\label{DrA}
	\end{align}
 	where the sequence $(g_r(n))_{n \ge 0}$ is given by the same recurrence relation
	\[
		g_r(n + r) = \sum_{k = 0}^n\binom{n}{k}g_r(k)\quad (n\ge 1)
	\]
	as \eqref{def of brj}, with the initial values $g_r(n) = (-1)^{r - 1}\delta_{n,r}$ for $0\le n\le r$.
\end{theorem}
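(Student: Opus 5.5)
Since both sides of \eqref{DrA} are determined by the recurrence once the initial values are fixed, the plan is to prove it by strong induction on $n$, working prime by prime for all sufficiently large $p$. For $0\le n\le r-1$ the identity is trivial: $b_{r,j}(n)=\delta_{j,n}$ and $g_r(n)=0$. So the task is to show that the sequence $D_{r,\calA}(n)$ satisfies the same recurrence \eqref{def of brj} for $n\ge 0$, up to an explicit correction term. We then check that this correction is exactly what the initial values of $g_r$ encode.

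The key computation is a shift identity at a fixed prime $p$. Write $S_p(n)=\sum_{k=0}^{p-1}k^n/(k!)^r$. Since $k^r/(k!)^r=1/((k-1)!)^r$ for $k\ge1$, we get
\[
S_p(n+r)=\sum_{k=1}^{p-1}\frac{k^{n}}{((k-1)!)^r}=\sum_{m=0}^{p-2}\frac{(m+1)^n}{(m!)^r}=\sum_{i=0}^{n}\binom{n}{i}S_p(i)-\frac{p^n}{((p-1)!)^r}.
\]
The boundary term is the $m=p-1$ term that had to be removed. For $n\ge1$ this term is divisible by $p$, hence $0 \bmod p$. For $n=0$ it equals $1/((p-1)!)^r\equiv(-1)^r \bmod p$ by Wilson's theorem. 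In $\calA$ this gives
\[
D_{r,\calA}(n+r)=\sum_{i=0}^n\binom{n}{i}D_{r,\calA}(i)-(-1)^r\delta_{n,0}.
\]
So $D_{r,\calA}(r)=D_{r,\calA}(0)+(-1)^{r-1}$, and for $n\ge1$ the sequence satisfies the homogeneous recurrence exactly.

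To finish, define $T(n)=\sum_j b_{r,j}(n)D_{r,\calA}(j)+g_r(n)$ and show $T(n)=D_{r,\calA}(n)$ for all $n$.
\begin{itemize}
\item For $0\le n\le r-1$, both sides agree by the trivial case above.
\item At $n=r$: $b_{r,j}(r)=b_{r,j}(0)=\delta_{j,0}$, and $g_r(r)=(-1)^{r-1}$, so $T(r)=D_{r,\calA}(0)+(-1)^{r-1}$. This matches the shift identity at $n=0$.
\item For $n\ge1$: $T$ satisfies the homogeneous recurrence at index $n+r$, because both $b_{r,j}$ and $g_r$ satisfy it for $n\ge1$. By the shift identity, $D_{r,\calA}$ does too, so the induction closes.
\end{itemize}
This is consistent with the convention that $g_r$ is specified on $0\le n\le r$ and the recurrence is imposed only for $n\ge1$.

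The only delicate point is the boundary term. One must track the sign $(-1)^r$ from Wilson's theorem correctly, and note that $p^n\equiv0$ requires $n\ge1$, which is why $n=0$ is the exceptional index. The finitely many small primes are irrelevant in $\calA$. No earlier result is needed; in particular, \cref{analogue of Dobinski} is the case $r=1$, with $g=g_1$.
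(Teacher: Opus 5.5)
Your proposal is correct and follows the paper's route: the paper proves the truncated shift identity (\cref{lem:DrN-rec}, stated with an extra parameter $x$), evaluates the boundary term at $N=p$ via Wilson's theorem (and $x^p\equiv x$) to get the correction $(-1)^{r-1}x\,\delta_{n,0}$, and then runs the same induction, treating $0\le n<r$, $n=r$ and $n\ge 1$ separately. Setting $x=1$ recovers your argument exactly, including the sign bookkeeping at the exceptional index $n=0$.
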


In \cref{Dobinski}, we prove the above theorems in more general setting. However, it remains unknown whether $e_\calA$ is irrational in the ring $\calA$, or whether $D_{r,\calA}(j)$ for $0 \le j \le r-1$ are $\bbQ$-linearly independent. As noted in OEIS~\cite[\href{https://oeis.org/A064384}{A064384}]{OEIS}, only finitely many, and in fact very few, primes are currently known for which the $p$-component of $e_\calA$ vanishes. This situation is very similar to that of Wolstenholme primes \cite[\href{https://oeis.org/A088164}{A088164}]{OEIS}, which are related to finite multiple zeta values, and Wilson primes \cite[\href{https://oeis.org/A007540}{A007540}]{OEIS}, which are connected to the finite Euler constant discussed below.

The second topic concerns $\calA$-analogues of the Euler constant. In~\cite{KMS2025}, the authors focused on the classical formulas for Euler's constant due to Mascheroni and its generalization by Kluyver, and introduce corresponding $\calA$-analogues. Here, Mascheroni's formula is given by
\[
	\gamma = \sum_{n=1}^\infty \frac{(-1)^{n-1} G_n}{n}	
\]
in terms of the \textit{Gregory coefficients} defined by
\begin{align}\label{def:Gregory-coeff}
	\frac{t}{\log(1+t)} = \sum_{n=0}^\infty G_n t^n,
\end{align}
(see OEIS~\cite[\href{https://oeis.org/A002206}{A002206}]{OEIS}). From these formulas, finite analogues of Euler's constant, $\gamma_\calA^\mathrm{M}$ and $\gamma_\calA^{\mathrm{K},m}$ ($m \ge 1$), are constructed in~\cite{KMS2025}, and compared with another finite analogue defined via the Wilson quotient
\begin{align}\label{def:Wilson-gamma}
	\gamma_\calA^\mathrm{W} \coloneqq \left(\frac{(p-1)!+1}{p} \bmod{p}\right)_p \in \calA.
\end{align}
More precisely, as stated in~\cite[Remark 4.2]{KMS2025}, they showed that ``their various analogues of $\gamma$ in $\calA$ differ only by linear combinations of $\log_\calA(j)$ ($j \in \bbN$) and $1$". 
In this article, we generalize the Gregory coefficients to Gregory polynomials, thereby extending the results of \cite{KMS2025} and providing a more concise proof. More detailed discussions will be presented in \cref{KMSw/parameters}.

\section*{Acknowledgements}

The authors would like to thank Prof.~Masanobu Kaneko for suggesting Dobi\'{n}ski's formula and its generalizations. The first author was supported by JSPS KAKENHI (JP21K18141 and JP24K16901).

\section{\texorpdfstring{Generalized Dobi\'nski's formula and its $\calA$-analogue}{Generalized Dobi\'nski's formula and its $\calA$-analogue}}\label{Dobinski}

In this section, we prove \cref{Dobinski with r} and \cref{analogue of Dobinski with r} in a more general setting.

\subsection{$\overline{\bbQ}$-linear independence}

By applying the refined version of the Siegel--Shidlovskii theorem due to Beukers~\cite{Beukers2006}, we prove the following theorem as a generalization of the latter assertion of \cref{Dobinski with r}, namely, that the $D_r(j)$'s are linearly independent over $\overline{\bbQ}$. 

\begin{definition}\label{def:Dr-x}
	For any integers $r \ge 1$ and $n \ge 0$, we define
	\[
		D_r(n; x) \coloneqq \sum_{k=0}^\infty \frac{k^n x^k}{(k!)^r}.
	\]
\end{definition}

This provides a generalization of the functions $e^x = D_1(0; x)$, $I_0(2\sqrt{x}) = D_2(0;x)$, and so on.

\begin{theorem}\label{thm:Drx-Q-indep}
	For any $x \in \overline{\bbQ} \setminus \{0\}$, the values $D_r(0; x), D_r(1; x), \dots, D_r(r-1; x)$ are linearly independent over $\overline{\bbQ}$. In particular, when $x=1$, we obtain the latter part of \cref{Dobinski with r}.
\end{theorem}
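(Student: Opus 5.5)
Our plan is to realize $D_r(0;x),\dots,D_r(r-1;x)$ as values at $x$ of the components of a solution of a first-order linear system over $\overline{\bbQ}(z)$, and then apply Beukers' refined Siegel--Shidlovskii theorem~\cite{Beukers2006}. Put $f(z)\coloneqq D_r(0;z)=\sum_{k\ge0} z^k/(k!)^r$, which is an $E$-function. With $\theta = z\,\frac{d}{dz}$ we have $\theta^n f = D_r(n;z)$. Moreover $f$ satisfies the hypergeometric equation $\theta^r f = z f$, since $\theta^r$ sends $z^k/(k!)^r$ to $k^r z^k/(k!)^r = z\cdot z^{k-1}/((k-1)!)^r$. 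Hence the vector
\[
	Y(z) = \bigl(D_r(0;z), D_r(1;z), \dots, D_r(r-1;z)\bigr)^{T}
\]
satisfies $Y' = A(z)Y$, where $A(z)$ has entries in $\overline{\bbQ}[z,z^{-1}]$: we have $zY_j' = Y_{j+1}$ for $j<r-1$ and $zY_{r-1}' = zY_0$. The only singularity of this system in $\bbC$ is $z=0$. Beukers' theorem then says the following. For any $\xi\in\overline{\bbQ}$ with $\xi\,T(\xi)\ne0$, where $T$ is the common denominator of $A$ (here $T(z)=z$), every $\overline{\bbQ}$-linear relation among the values $Y_j(\xi)$ is the specialization at $z=\xi$ of a $\overline{\bbQ}[z]$-linear relation among the functions $Y_j(z)$. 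For $x\ne0$ this applies. The problem is therefore reduced to showing that $D_r(0;z),\dots,D_r(r-1;z)$ are linearly independent over $\overline{\bbQ}(z)$.

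We expect this functional independence to be the main step. Equivalently, $f$ satisfies no nontrivial linear differential equation over $\overline{\bbQ}(z)$ of order less than $r$. This is because $Y_0,\dots,Y_{r-1}$ are $f,\theta f,\dots,\theta^{r-1}f$, and these span the same $\overline{\bbQ}(z)$-space as $f,f',\dots,f^{(r-1)}$. So it suffices to show that the operator $L=\theta^r - z$ is irreducible over $\overline{\bbQ}(z)$, since $f$ is then a nonzero solution of an irreducible operator of order $r$.

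First we check that $f$ cannot satisfy a lower-order equation directly from coefficients. Suppose $\sum_{j<r} c_j(z)\theta^j f=0$ with polynomials $c_j$, not all zero. Write $c_j(z)=\sum_i c_{j,i}z^i$. Comparing the coefficients of $z^{k+m}$ gives
\[
	\sum_{i}\frac{P_i(k+m-i)}{((k+m-i)!)^r}=0,
\]
where $P_i(t)=\sum_j c_{j,i}t^j$ has degree less than $r$. Multiply by $((k+m)!)^r$ and take the largest shift $i=i_0$ with $P_{i_0}\ne0$. The term with the smallest factorial then carries a factor of size $k^{r i_0}$ relative to the others. The resulting identity expresses a polynomial of degree $< r + r i_0$ in terms of falling factorials, and comparing growth in $k$ rules it out.

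A cleaner alternative is Katz's irreducibility criterion for hypergeometric operators. The operator $\theta^r - z$ is the hypergeometric operator ${}_0F_{r-1}(;1,\dots,1;z)$, with all bottom parameters equal to $1$ and no top parameters. It is irreducible because no top parameter is congruent modulo $\bbZ$ to a bottom parameter, and here there are no top parameters at all. Either route gives the independence over $\overline{\bbQ}(z)$.

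Combining these steps, there is no nontrivial $\overline{\bbQ}[z]$-linear relation among the $Y_j(z)$. By Beukers' theorem, there is then no nontrivial $\overline{\bbQ}$-linear relation among $D_r(0;x),\dots,D_r(r-1;x)$ for $x\in\overline{\bbQ}\setminus\{0\}$. Taking $x=1$ gives the latter assertion of \cref{Dobinski with r}.
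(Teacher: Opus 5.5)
There is a genuine gap where you invoke Beukers' theorem. For $r\ge 2$ the function $f(z)=D_r(0;z)=\sum_{k\ge0}z^k/(k!)^r$ is \emph{not} an $E$-function, and neither are the functions $D_r(j;z)=\theta^jf$. Writing $f(z)=\sum_k a_kz^k/k!$, as the definition requires, gives $a_k=(k!)^{1-r}$. Hence $\den(a_0,\dots,a_k)=(k!)^{r-1}$ and $\log\den(a_0,\dots,a_k)\sim(r-1)k\log k$, which is not $O(k)$. Indeed $\den(a_0,\dots,a_k)$ is not even $O(k^{\varepsilon k})$ for small $\varepsilon>0$, so Siegel's original, weaker growth condition fails as well. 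The Siegel--Shidlovskii--Beukers theorem therefore says nothing about the values of your vector $Y(z)$; only the case $r=1$, $f=e^z$, goes through as written.

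The rest of your setup is correct. The system $zY_j'=Y_{j+1}$ for $j<r-1$, $Y_{r-1}'=Y_0$ has $T(z)=z$. The functions $D_r(0;z),\dots,D_r(r-1;z)$ are independent over $\overline{\bbQ}(z)$, either by your coefficient comparison or by irreducibility of $\theta^r-z$. For the comparison, after multiplying by $(N!)^r$, the term with the largest shift $i_0$ has degree at least $ri_0$ in $N$, while all other terms have degree at most $ri_0-1$.

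The missing idea is the substitution $z\mapsto z^r$, which is exactly what the paper does. Put $E(z)\coloneqq D_r(0;z^r)=\sum_n z^{rn}/(n!)^r=\sum_N a_Nz^N/N!$. Now $a_{rn}=(rn)!/(n!)^r$ is a multinomial coefficient, an integer bounded by $r^{rn}$, so $E$ is an $E$-function. With $\vartheta=z\,d/dz$ one gets $E_j\coloneqq\vartheta^jE=r^jD_r(j;z^r)$ and $\vartheta^rE=(rz)^rE$. The companion system again has $T(z)=z$, and Beukers' theorem is applied at $\xi=x^{1/r}$ (any $r$-th root of $x$, which is algebraic and nonzero).

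Your functional independence transfers to the $E_j$. Suppose $\sum_jp_j(z)E_j(z)=0$ with $p_j\in\overline{\bbQ}[z]$, and write $p_j(z)=\sum_{i=0}^{r-1}z^iq_{j,i}(z^r)$. The piece $z^i\sum_jq_{j,i}(z^r)E_j(z)$ contains only monomials $z^N$ with $N\equiv i\pmod r$, so each piece vanishes separately. This gives $\sum_jr^jq_{j,i}(w)D_r(j;w)=0$ for every $i$, which forces all $q_{j,i}=0$. The paper instead reruns the coefficient comparison directly in the variable $z$. With this modification your argument becomes essentially the paper's proof.
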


We first recall the notion of $E$-functions.

\begin{definition}
	An entire function $f$ given by a power series
	\[
		f(z) = \sum_{n=0}^\infty a_n \frac{z^n}{n!} \quad (a_n \in \overline{\mathbb{Q}})
	\]
	is called an \textit{$E$-function} if it satisfies both of the following conditions.
	\begin{enumerate}[label=(\arabic*)]
		\item The function $f$ satisfies a linear differential equation with coefficients in $\overline{\mathbb{Q}} [z]$.
		\item For the sequence of coefficients $(a_n)_{n \ge 0}$,
		\[
			\log \overline{|a_n|} = O(n), \quad \log (\den (a_0, \ldots, a_n)) = O(n)
		\]
		hold. Here, $\overline{|a_n|}$ is the maximum absolute value of the conjugates of $a_n \in \overline{\mathbb{Q}}$ (the roots of the minimal polynomial of $a_n$), and $\den(a_0, \ldots, a_n)$ is the least common multiple of the denominators of $a_0, \ldots, a_n$ (the smallest integer $d \ge 1$ such that $d a_k$ is an algebraic integer).
	\end{enumerate}
\end{definition}

The exponential function is the most typical example of $E$-functions. For a collection of $E$-functions, the following theorem holds.

\begin{theorem}[{Refined Siegel--Shidlovskii theorem~\protect{\cite[Corolary 1.4]{Beukers2006}}}]\label{thm:ref-SS}
	Let $f_1, \ldots, f_n$ be a set of $E$-functions which satisfy the system of first order equations
	\[
		\frac{\dd}{\dd z}
		\begin{pmatrix}
			y_1 \\ \vdots \\ y_n
		\end{pmatrix} = A \begin{pmatrix}
		y_1 \\ \vdots \\ y_n
		\end{pmatrix}
	\]
	where $A$ is an $n \times n$-matrix with entries in $\overline{\mathbb{Q}} (z)$. Denote the common denominator of the entries of $A$ by $T(z)$. Suppose that $f_1 (z), \ldots, f_n (z)$ are linearly independent over $\overline{\bbQ} (z)$. Then for any $\xi \in \overline{\bbQ}$, with $\xi T(\xi) \ne 0$, the numbers $f_1 (\xi), \ldots, f_n (\xi)$ are $\overline{\mathbb{Q}}$-linear independent. 
\end{theorem}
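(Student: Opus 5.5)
This is the refined Siegel--Shidlovskii theorem of Beukers, and I would follow his route through André's theory of $E$-operators rather than the classical Siegel--Shidlovskii elimination argument. The idea is to turn a numerical $\overline{\bbQ}$-linear relation at $\xi$ into a functional relation over $\overline{\bbQ}[z]$. Fix $\xi\in\overline{\bbQ}$ with $\xi T(\xi)\neq 0$, and suppose $\lambda_1 f_1(\xi)+\cdots+\lambda_n f_n(\xi)=0$ with $\lambda_i\in\overline{\bbQ}$.

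\textbf{Key lemma.} There exist polynomials $P_1,\dots,P_n\in\overline{\bbQ}[z]$ with
\[
	P_1(z)f_1(z)+\cdots+P_n(z)f_n(z)=0 \quad\text{and}\quad P_i(\xi)=\lambda_i \text{ for all } i.
\]
Granting this, the theorem is immediate. By hypothesis the $f_i$ are linearly independent over $\overline{\bbQ}(z)$, so all $P_i$ vanish identically, and hence every $\lambda_i=P_i(\xi)=0$.

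To prove the key lemma, I would proceed in three steps.

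\emph{Step 1 (divide out the zero).} Set $g(z)=\sum_i\lambda_i f_i(z)$. This is an $E$-function with $g(\xi)=0$. Then $h(z)=g(z)/(z-\xi)$ is again entire, and its Taylor coefficients still satisfy the growth and denominator bounds; this is an elementary estimate because $\xi$ is algebraic and nonzero. Moreover $h$ satisfies a linear differential equation, so $h$ is an $E$-function.

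\emph{Step 2 (André's theorem).} André's theorem says that the minimal differential operator of an $E$-function has singularities only at $0$ and $\infty$. So the minimal operator of $h$, like that of $g$, is regular at $\xi$.

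\emph{Step 3 (descend to the system).} Let $M$ be the $\overline{\bbQ}(z)$-span of $f_1,\dots,f_n$. It is closed under $\frac{\dd}{\dd z}$ because of the system $y'=Ay$, and $g\in M$. I would show that $h$ lies in the $\overline{\bbQ}[z,1/(zT)]$-module generated by the $f_i$. Writing $h=\sum_i Q_i f_i$ with $Q_i$ regular at $\xi$ then gives $g=\sum_i (z-\xi)Q_i f_i$. Subtracting this from $g=\sum_i\lambda_i f_i$ yields a relation with coefficients $\lambda_i-(z-\xi)Q_i$, which take the value $\lambda_i$ at $\xi$. Clearing denominators by a power of $zT$, which is nonzero at $\xi$, and rescaling gives the required polynomials $P_i$.

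\textbf{Main obstacle.} The hard part is Step 3: showing that $h$ can be expressed in terms of the $f_i$ with coefficients that have no pole at $\xi$. Membership of $h$ in $M$ is not even clear a priori. I would handle this by comparing solution spaces. The operator annihilating the vector $(f_i)$ is nonsingular at $\xi$, since $T(\xi)\neq0$, and André's theorem gives that $h$'s minimal operator is also nonsingular at $\xi$. Near $\xi$, the local holomorphic solutions of these operators are then determined by their Taylor data. Using this, I would argue that the vector of $f_i$ transforms under a gauge matrix in $\mathrm{GL}_n(\overline{\bbQ}[z,1/(zT)])$ into a system containing $h$. This is exactly where the full strength of André's theorem (no singularity at $\xi$) and the condition $\xi T(\xi)\ne0$ are used. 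The remaining verifications, namely the $E$-function bounds for $h$ and the clearing of denominators, are routine.
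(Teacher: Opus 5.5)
The paper does not prove this statement; it quotes it from Beukers, so your proposal has to stand on its own. Your outline has the right shape: divide $g=\sum_i\lambda_i f_i$ by $z-\xi$, invoke André, then use that $y'=Ay$ is regular at $\xi$. However, the step you call routine carries all the difficulty, and your justification of it is wrong. Write $g=\sum_k c_k z^k$. From $(z-\xi)h=g$ you get $m!\,h_m=-m!\,\xi^{-m-1}\sum_{k\le m}c_k\xi^k$. The denominator bound is elementary, and so is the bound in the given complex embedding, via the tail $h_m=\sum_{k>m}c_k\xi^{k-m-1}$. But the definition of an $E$-function bounds the house, i.e.\ all conjugates, and for an embedding $\sigma$ one has $\sigma(m!\,h_m)=-m!\,\sigma(\xi)^{-m-1}\bigl(g^{\sigma}(\sigma(\xi))+o(1)\bigr)$. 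So $h$ is an $E$-function exactly when $g^{\sigma}(\sigma(\xi))=0$ for every $\sigma$. No estimate yields this; it is a transcendence statement (for $g=e^z-\alpha$ it would mean deducing $e^{\sigma(\xi)}=\sigma(\alpha)$ from $e^{\xi}=\alpha$). That an $E$-function vanishing at $\xi\in\overline{\bbQ}^\times$ stays an $E$-function after division by $z-\xi$ is a deep consequence of André's theory of $E$-operators, and it is exactly the arithmetic input Beukers uses. Without it your Steps 2--3 cannot succeed, because local analysis at $\xi$ cannot detect one particular solution vanishing there (think of $(z-1)e^z$).

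Step 2 also misstates André's result. The minimal operator of an $E$-function can be singular at a nonzero point: $(z-1)e^z$ has minimal operator $(z-1)\frac{\dd}{\dd z}-z$. What is true is that an $E$-function is annihilated by some operator whose singularities lie in $\{0,\infty\}$. Since the minimal operator $L_h$ right-divides that operator, every local solution of $L_h$ near $\xi\neq0$ is holomorphic at $\xi$. With these two inputs, Step 3 is short and needs no gauge transformation. Also, $h=\sum_i\frac{\lambda_i}{z-\xi}f_i\in M$ holds trivially, contrary to what you write. Put $\ell_0=\lambda/(z-\xi)$ and $\ell_{k+1}=\ell_k'+\ell_kA$, so that $h^{(k)}=\ell_kF$ with $F=(f_1,\dots,f_n)^{\top}$. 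If $L_h=\sum_k c_k(\frac{\dd}{\dd z})^k$, the $\overline{\bbQ}(z)$-independence of the $f_i$ turns $L_hh=0$ into $\sum_k c_k\ell_k=0$. Hence $L_h$ annihilates $\lambda\cdot Y/(z-\xi)$ for every local solution $Y$ of $Y'=AY$. Since $T(\xi)\neq0$, if $\lambda\neq0$ you can choose $Y$ with $\lambda\cdot Y(\xi)\neq0$. That gives a solution of $L_h$ with a pole at $\xi$, a contradiction, so $\lambda=0$.
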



This provides, for instance, a generalization of the Lindemann--Weierstrass theorem. To prove \cref{thm:Drx-Q-indep} using this, it suffices to find a collection of $E$-functions that take $D_r(j; x)$ as special values.

\begin{lemma}
  For a positive integer $r$, the entire function
  \[
    E(z) := \sum_{n=0}^{\infty} \frac{z^{rn}}{(n!)^r}
  \]
  is an $E$-function.
\end{lemma}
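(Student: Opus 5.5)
We check the two conditions in the definition of an $E$-function directly. The series has the form $E(z)=\sum_{m\ge 0} a_m z^m/m!$ with $a_m=0$ unless $r\mid m$, and
\[
	a_{rn} = \frac{(rn)!}{(n!)^r} = \binom{rn}{n,n,\ldots,n},
\]
a multinomial coefficient. In particular every $a_m$ is a rational integer, so $\den(a_0,\ldots,a_m)=1$ and the denominator condition holds trivially. For the growth condition, the multinomial theorem gives $a_{rn}\le (1+\cdots+1)^{rn}=r^{rn}$. Hence $\log\overline{|a_m|}=\log|a_m|\le m\log r=O(m)$; since $a_m$ is rational it is its own only conjugate. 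The same bound $a_m\le r^m$ also shows that $E$ is entire.

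It remains to produce a linear differential equation with coefficients in $\overline{\bbQ}[z]$. We use the Euler operator $\theta=z\,\dd/\dd z$, which acts on monomials by $\theta z^{rn}=rn\,z^{rn}$. Then
\[
	\theta^r E(z)=\sum_{n\ge1}\frac{(rn)^r z^{rn}}{(n!)^r}=r^r\sum_{n\ge1}\frac{z^{rn}}{((n-1)!)^r}=r^r z^r\sum_{n\ge0}\frac{z^{rn}}{(n!)^r}=r^r z^r E(z),
\]
where we used $n^r/(n!)^r=1/((n-1)!)^r$ and shifted $n\mapsto n+1$. So $E$ satisfies $\theta^r y=r^rz^ry$. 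Expanding $\theta^r$ as a polynomial in $z$ and $\dd/\dd z$ with integer coefficients turns this into a linear ODE with polynomial coefficients, and the leading coefficient is $z^r$, which is nonzero.

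No step here is a real obstacle. The only point needing care is the growth bound, which the multinomial estimate settles; the differential equation is a one-line index shift.
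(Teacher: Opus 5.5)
Your proposal is correct and follows essentially the same route as the paper. Both arguments take the differential equation $\vartheta^r E(z) = (rz)^r E(z)$ from the Euler operator, note that $a_{rn} = (rn)!/(n!)^r$ are integers so the denominator condition is trivial, and bound $a_{rn} \le r^{rn}$ by the multinomial theorem.
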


\begin{proof}
	Let $\vartheta \coloneqq z(\dd/\dd z)$ be the \textit{Euler operator}. A direct calculation yields
	\[
		\vartheta^r E(z) = (rz)^r E(z),
	\]
	which implies that $E(z)$ satisfies a linear differential equation with coefficients in $\overline{\bbQ}[z]$.
  
  The function $E(z)$ can be written in the form
  \[
    E(z) = \sum_{N=0}^{\infty} a_N \frac{z^N}{N!},
  \]
  where $(a_N)_{N \geq 0}$ is the sequence of integers defined by
  \[
    a_N :=
    \begin{cases}
      (rn)! / (n!)^r & \text{if $N = rn$ for some $n \ge 0$}, \\
      0 & \text{otherwise.}
    \end{cases}
  \]
  In particular, we have $\mathrm{den}(a_0, \dots, a_n) = 1$ and $\overline{|a_n|} = a_n$. It therefore suffices to examine the behavior of $\log a_n$.
  For $N = rn$, by the multinomial theorem,
  \[
    a_N = \binom{rn}{n, \ldots, n}
    \le \sum_{k_1 + \cdots + k_r = rn} \binom{rn}{k_1, \ldots, k_r}
    = r^{rn} = r^N,
  \]
  and therefore $\log a_N = O(N)$. It follows that $E(z)$ is an $E$-function.
\end{proof}

Since $E$-functions are closed under the action of the Euler operator $\vartheta$, defining $E_j(z) \coloneqq \vartheta^j E(z)$ yields a collection of $E$-functions $E_0 (z), E_1 (z), \ldots, E_{r-1} (z)$. 

\begin{lemma}
	The $E$-functions $E_0 (z), E_1 (z), \ldots, E_{r-1} (z)$ are linearly independent over $\mathbb{C} (z)$.
\end{lemma}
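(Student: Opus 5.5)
Suppose, for contradiction, that there is a nontrivial relation $\sum_{j=0}^{r-1} c_j(z) E_j(z) = 0$ with $c_j \in \bbC(z)$. After clearing denominators we may assume $c_j \in \bbC[z]$, not all zero. The plan is to turn this into a relation among the power series coefficients and reach a contradiction by looking at how the coefficients grow, or equivalently at the monomials that occur. Write $E(z) = \sum_{n\ge 0} z^{rn}/(n!)^r$. Then
\[
	E_j(z) = \vartheta^j E(z) = \sum_{n \ge 0} \frac{(rn)^j z^{rn}}{(n!)^r}.
\]
So every $E_j$ is supported on exponents divisible by $r$.

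\emph{First reduction.} Split each $c_j$ according to the residue of its exponents modulo $r$: $c_j(z) = \sum_{s=0}^{r-1} z^s c_{j,s}(z^r)$. Since the $E_j$ only involve powers $z^{rn}$, the relation breaks into $r$ separate relations, one for each residue $s$. At least one of these is nontrivial. Dividing out the factor $z^s$ and substituting $w = z^r$, we get polynomials $P_j(w)$, not all zero, with
\[
	\sum_j P_j(w) F_j(w) = 0, \qquad F_j(w) = \sum_n \frac{(rn)^j w^n}{(n!)^r} = r^j D_r(j; w).
\]

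\emph{Coefficient comparison.} Write $P_j(w) = \sum_{i=0}^{d} p_{j,i} w^i$. The coefficient of $w^N$ gives, for every large $N$,
\[
	\sum_{i=0}^d \sum_{j} p_{j,i} \frac{(N-i)^j}{((N-i)!)^r} = 0.
\]
Multiply by $(N!)^r$. Each term becomes $p_{j,i}(N-i)^j \bigl(N(N-1)\cdots(N-i+1)\bigr)^r$, which is a polynomial in $N$. The whole expression is therefore a polynomial $Q(N) = \sum_{i,j} p_{j,i}(N-i)^j (N)_i^{\,r}$ that vanishes for all large $N$, so $Q \equiv 0$.

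\emph{Degree argument (main step).} Let $i_0$ be the largest $i$ with some $p_{j,i} \ne 0$, and among those $j$ take the largest, $j_0 \le r-1$. The term $(N)_{i_0}^{\,r}(N-i_0)^{j_0}$ has degree $r i_0 + j_0$. Any term with $i < i_0$ has degree at most $r(i_0-1) + (r-1) < r i_0$. Any term with $i = i_0$ and $j < j_0$ has degree smaller than $r i_0 + j_0$. So the coefficient of $N^{r i_0 + j_0}$ in $Q$ is $p_{j_0,i_0} \ne 0$, which contradicts $Q \equiv 0$.

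The delicate point is exactly this degree bookkeeping: the bound $j \le r-1 < r$ is what keeps the different shifts $i$ from mixing. The residue splitting in the first step is routine. This completes the proof of linear independence over $\bbC(z)$.
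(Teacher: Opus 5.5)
Your argument is correct and is essentially the paper's proof. The paper also compares the coefficient of $z^{rN+m_0}$ for all large $N$, multiplies by $(N!)^r$ to get a polynomial in $N$ built from falling factorials $\bigl(N(N-1)\cdots(N-l+1)\bigr)^r$, and uses $j\le r-1$ to show that the largest shift dominates in degree. The only organizational difference is that the paper works directly in the residue class $m_0$ of the largest index $m^*$ with $Q_{m^*}\not\equiv 0$, instead of first splitting the relation into all $r$ residue classes.

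There is one harmless slip. Since $F_j(w)=\sum_n (rn)^j w^n/(n!)^r$, the coefficient of $w^N$ involves $(r(N-i))^j$ rather than $(N-i)^j$. The leading coefficient of $Q$ is therefore $r^{j_0}p_{j_0,i_0}$, which is still nonzero, so the argument is unaffected.
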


\begin{proof}
	We prove this by contradiction. Suppose that there exist polynomials $p_j(z) \in \bbC[z]$ such that
	\begin{align}\label{eq:Contra}
		\sum_{j=0}^{r-1} p_j(z) E_j(z) = 0.
	\end{align}
	Write each $p_j(z) = \sum_{m \ge 0} a_{j,m} z^m$ and assume that $a_{j,m} \neq 0$ for some index. Set $d \coloneqq \max \{\deg p_j : 0 \le j \le r-1\}$. Then we have
	\[
		\sum_{j=0}^{r-1} p_j(z) E_j(z) = \sum_{m=0}^d \sum_{n=0}^\infty Q_m(rn) \frac{z^{rn+m}}{(n!)^r}.
	\]
	where
	\[
		Q_m(x) \coloneqq \sum_{j=0}^{r-1} a_{j,m} x^j.
	\]
	Let $m^* \coloneqq \max \{0 \le m \le d : Q_m(z) \not\equiv 0\}$ and write $m^* = l^* r+m_0$ with $l^* \ge 0$ and $0 \le m_0 \le r-1$. For any $N \ge l^*$, the coefficient of $z^{rN+m_0}$ is given by
	\begin{align*}
		\sum_{m=0}^d \sum_{\substack{n \ge 0 \\ rn + m = rN+m_0}} \frac{Q_m(rn)}{(n!)^r} &= \sum_{l=0}^{l^*} \frac{Q_{lr+m_0}(r(N-l))}{((N-l)!)^r}\\
			&= \frac{1}{(N!)^r} \sum_{l=0}^{l^*} Q_{lr+m_0}(r(N-l)) \bigg(N(N-1) \cdots (N-l+1)\bigg)^r,
	\end{align*}
	which is equal to $0$ by \eqref{eq:Contra}. Hence, the polynomial
	\[
		\sum_{l=0}^{l^*} Q_{lr+m_0}(r(x-l)) \bigg(x(x-1) \cdots (x-l+1)\bigg)^r
	\]
	vanishes at all integers $x = N \ge l^*$, and therefore must be identically zero. However, the term corresponding to $l = l^*$ has degree at least $l^*r$, while each of the  remaining terms has degree at most $l^* r -1$. Thus, the above polynomial can not be identically zero, which yields a contradiction.
\end{proof}

\begin{lemma}
	The $E$-functions $E_0 (z), E_1 (z), \ldots, E_{r-1} (z)$ satisfy a system of first-order differential equation.
\end{lemma}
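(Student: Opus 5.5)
The plan is to write the system explicitly, using the Euler operator $\vartheta = z\,\dd/\dd z$ and the relation $\vartheta^r E(z) = (rz)^r E(z)$ proved in the first lemma. Set $E_j(z) = \vartheta^j E(z)$ for $0 \le j \le r-1$. Then for $0 \le j \le r-2$ we have $\vartheta E_j = E_{j+1}$, while for $j = r-1$ we have
\[
	\vartheta E_{r-1} = \vartheta^r E = (rz)^r E_0 .
\]
Dividing by $z$ gives $\frac{\dd}{\dd z} E_j = z^{-1} E_{j+1}$ for $j \le r-2$, and $\frac{\dd}{\dd z} E_{r-1} = r^r z^{r-1} E_0$.

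In matrix form,
\[
	\frac{\dd}{\dd z}
	\begin{pmatrix} E_0 \\ \vdots \\ E_{r-1} \end{pmatrix}
	= A(z)
	\begin{pmatrix} E_0 \\ \vdots \\ E_{r-1} \end{pmatrix}.
\]
Here $A(z)$ has entries $z^{-1}$ on the superdiagonal, the entry $r^r z^{r-1}$ in position $(r,1)$, and zeros elsewhere. All entries lie in $\overline{\bbQ}(z)$, and their common denominator is $T(z) = z$.

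The only point needing care is the degenerate case $r = 1$. There $E_0 = E = e^{z}$, the superdiagonal is empty, and $A = (1)$, so $T(z) = 1$. This is consistent with the general formula, since $r^r z^{r-1} = 1$ when $r=1$.

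There is no real obstacle; the content is the first-order rewriting of $\vartheta^r E = (rz)^r E$. For the later application of \cref{thm:ref-SS}, one should record that $\xi T(\xi) \ne 0$ exactly when $\xi \ne 0$. Since $E_j(z)$ evaluated at a suitable $\xi$ with $\xi^r = x$ is $r^j D_r(j; x)$, this will give \cref{thm:Drx-Q-indep} for every nonzero algebraic $x$.
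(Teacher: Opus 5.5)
Your proposal is correct and is the paper's argument: both use $\frac{\dd}{\dd z} = z^{-1}\vartheta$ to turn $\vartheta^r E = (rz)^r E$ into the companion-type system with entries $z^{-1}$ on the superdiagonal and $r^r z^{r-1}$ in position $(r,1)$, with common denominator $T(z)=z$. Your separate treatment of $r=1$, where $T(z)=1$, is a harmless extra detail. It does not affect the condition $\xi T(\xi)\neq 0 \iff \xi\neq 0$ used later.
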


\begin{proof}
  Noting from the definition of the Euler operator that $\frac{\dd}{\dd z} = z^{-1} \vartheta$, we have
  \[
    \frac{\dd}{\dd z}
    \begin{pmatrix}
      E_0 (z) \\ \vdots \\ E_{r-1} (z)
    \end{pmatrix}
    = \frac{1}{z}
    \begin{pmatrix}
      0 & 1 & \cdots & 0 \\
      \vdots & \vdots & \ddots & \vdots \\
      0 & 0 & \cdots & 1 \\
      (rz)^r & 0 & \cdots & 0
    \end{pmatrix}
    \begin{pmatrix}
      E_0 (z) \\ \vdots \\ E_{r-1} (z)
    \end{pmatrix}
  \]
  Therefore, the collection of $E$-functions $Y = (E_0 (z), E_1 (z), \ldots, E_{r-1} (z))^{\top}$ satisfies the system of first-order differential equations $\frac{\dd}{\dd z} Y = A Y$, where $A \in \overline{\mathbb{Q}} (z)^{r \times r}$.
\end{proof}

\begin{proof}[Proof of \protect{\Cref{thm:Drx-Q-indep}}]
  We have already shown that the $E$-functions $E_0(z), E_1 (z), \ldots, E_{r-1} (z)$ satisfy the system of first-order differential equation $\frac{\dd}{\dd z} Y = A Y$ and are linearly independent over $\overline{\mathbb{Q}} (z)$. Moreover, the common denominator of the entries of $A$ is $T(z) = z$. Therefore, by \cref{thm:ref-SS}, for $x \in \overline{\bbQ} \setminus \{0\}$, the values $E_0 (x^{1/r}), \ldots, E_{r-1} (x^{1/r})$ are linearly independent over $\overline{\mathbb{Q}}$. Furthermore, for each $j = 0, 1, \dots, r-1$, we have
  \[
  	E_j(x^{1/r}) = \sum_{n=0}^\infty \frac{(rn)^j x^n}{(n!)^r} = r^j D_r(j; x),
  \]
  so that $D_r(0; x) \dots, D_r(r-1; x)$ are also linearly independent over $\overline{\bbQ}$.
\end{proof}

\subsection{Generalized Dobi\'{n}ski's formula}

We continue to consider the general object $D_r(n; x)$ introduced in \cref{def:Dr-x}. The common structure of the first part of \cref{Dobinski with r} and \cref{analogue of Dobinski with r} is captured by the following lemma.

\begin{lemma}\label{lem:DrN-rec}
	For any integers $r \ge 1, n \ge 0$ and $N \ge 1$, we define
	\[
		D_r^{(N)}(n; x) \coloneqq \sum_{k=0}^{N-1} \frac{k^n x^k}{(k!)^r}.
	\]
	Then, we have
	\[
		D_r^{(N)}(n+r; x) = x \sum_{k=0}^n \binom{n}{k} D_r^{(N)}(k; x) - \frac{N^n x^N}{((N-1)!)^r}.
	\]
\end{lemma}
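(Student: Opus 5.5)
The identity follows from a direct manipulation of the finite sum: an index shift followed by the binomial theorem. No earlier result of the paper is needed. Since $n + r \ge 1$, the $k = 0$ term of $D_r^{(N)}(n+r; x)$ vanishes because $0^{n+r} = 0$. So the sum runs over $1 \le k \le N-1$.

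\emph{Step 1 (cancel the factorials).} For $k \ge 1$ write $k^{n+r}/(k!)^r = k^n \cdot \bigl(k/k!\bigr)^r = k^n/((k-1)!)^r$. This gives
\[
	D_r^{(N)}(n+r; x) = \sum_{k=1}^{N-1} \frac{k^n x^k}{((k-1)!)^r}.
\]

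\emph{Step 2 (shift and expand).} Substitute $k = m+1$ with $0 \le m \le N-2$ to get
\[
	x \sum_{m=0}^{N-2} \frac{(m+1)^n x^m}{(m!)^r}.
\]
Then expand $(m+1)^n = \sum_{j=0}^n \binom{n}{j} m^j$, with the convention $0^0 = 1$. This convention matches the one implicit in $D_r^{(N)}(0;x)$. Interchanging the two finite sums yields
\[
	x \sum_{j=0}^n \binom{n}{j} \sum_{m=0}^{N-2} \frac{m^j x^m}{(m!)^r}.
\]

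\emph{Step 3 (complete the range).} The inner sum is $D_r^{(N)}(j;x)$ with its last term $m = N-1$ removed. Adding and subtracting that term for every $j$ gives a total correction of
\[
	x \sum_{j=0}^n \binom{n}{j} \frac{(N-1)^j x^{N-1}}{((N-1)!)^r} = \frac{N^n x^N}{((N-1)!)^r},
\]
again by the binomial theorem. This is exactly the subtracted term in the statement.

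There is no real obstacle here. The only points needing care are the boundary conventions: the vanishing of the $k=0$ term (which uses $n+r \ge 1$), the convention $0^0 = 1$, and the case $N = 1$. When $N = 1$ the shifted sum is empty. The identity then reads
\[
	0 = x \sum_{j} \binom{n}{j} 0^j - x,
\]
which holds.
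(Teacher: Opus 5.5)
Your proof is correct and is essentially the paper's argument: cancel $k^r$ against $(k!)^r$, shift the index, and apply the binomial theorem to $(k+1)^n$. The only difference is that the paper restores the missing $m=N-1$ term before expanding $(k+1)^n$, while you expand first and then collect the boundary terms with a second binomial sum, which is equivalent.
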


\begin{proof}
	A direct calculation yields
	\[
		D_r^{(N)}(n+r; x) = \sum_{k=1}^{N-1} \frac{k^n x^k}{((k-1)!)^r} = \sum_{k=0}^{N-1} \frac{(k+1)^n x^{k+1}}{(k!)^r} - \frac{N^n x^N}{((N-1)!)^r}
	\]
	The claim then follows by applying the binomial theorem to $(k+1)^n$.
\end{proof}

From this, we simultaneously prove the following two theorems.

\begin{theorem}[A generalization of \cref{Dobinski with r}]
	For any integers $r \ge 1$ and $n \ge 0$, we have
	\[
		D_r(n; x) = \sum_{j=0}^{r-1} b_{r,j}(n; x) D_r(j; x),
	\]
	where the coefficients $b_{r,j}(n; x)$ are defined by $b_{r,j}(n; x) = \delta_{j,n}$ for $0 \le n \le r-1$ and the recurrence relation
	\begin{align}\label{def:brj-x-rec}
		b_{r,j}(n+r; x) = x \sum_{k=0}^n \binom{n}{k} b_{r,j}(k; x) \quad (n \ge 0).
	\end{align}
\end{theorem}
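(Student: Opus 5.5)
We take the limit $N \to \infty$ in \cref{lem:DrN-rec} and then argue by strong induction on $n$. Fix $x$; every statement is pointwise in $x \in \bbC$, or can be read as an identity of entire functions in $x$.

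First, we pass to the limit in \cref{lem:DrN-rec}. For fixed $n$ and $r\ge 1$, the correction term satisfies
\[
	\left|\frac{N^n x^N}{((N-1)!)^r}\right| \le \frac{N^n |x|^N}{(N-1)!} \longrightarrow 0 \quad (N \to \infty),
\]
and $D_r^{(N)}(k;x) \to D_r(k;x)$ for every $k$, since the series defining $D_r(k;x)$ converges absolutely for all $x$ (ratio test). This gives the exact recurrence
\begin{align*}
	D_r(n+r; x) = x \sum_{k=0}^n \binom{n}{k} D_r(k; x) \quad (n \ge 0).
\end{align*}
We also note that the infinite sums start at $k=0$. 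The $k=0$ term $0^{n+r}$ vanishes because $n+r\ge 1$, which is exactly what the shift in the proof of \cref{lem:DrN-rec} used. So there is no boundary subtlety.

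Second, we prove the claimed formula by strong induction on $n$. For $0 \le n \le r-1$ it holds trivially, because $b_{r,j}(n;x)=\delta_{j,n}$. Now suppose the formula holds for all indices $\le n+r-1$, in particular for every $k\le n$. Substituting these into the recurrence above and exchanging the finite sums gives
\[
	D_r(n+r;x) = \sum_{j=0}^{r-1} \Bigl( x\sum_{k=0}^n \binom{n}{k} b_{r,j}(k;x)\Bigr) D_r(j;x) = \sum_{j=0}^{r-1} b_{r,j}(n+r;x) D_r(j;x),
\]
where the last step is the defining recurrence \eqref{def:brj-x-rec}. This completes the induction. Every index $m \ge r$ is reached this way, since $m = n+r$ with $n = m-r \ge 0$ and all $k \le n < m$.

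The only analytic step is the vanishing of the boundary term in the limit, and it is routine because of the factorial in the denominator. Otherwise the argument is purely formal: the coefficients $b_{r,j}(n;x)$ and the values $D_r(n;x)$ satisfy the same linear recurrence with matching initial data. Setting $x=1$ recovers the first part of \cref{Dobinski with r}.
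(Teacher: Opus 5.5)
Your proposal is correct and follows the paper's proof: pass to the limit $N \to \infty$ in \cref{lem:DrN-rec} to get $D_r(n+r;x) = x\sum_{k=0}^n \binom{n}{k} D_r(k;x)$, then run a strong induction on $n$ using the defining recurrence for $b_{r,j}(n;x)$. You also spell out why the boundary term vanishes and why the partial sums converge, which the paper only asserts.
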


\begin{proof}
	Taking the limit $N \to \infty$ in \cref{lem:DrN-rec}, we obtain
	\begin{align}\label{eq:Dr-rec}
		D_r(n+r; x) = x \sum_{k=0}^n \binom{n}{k} D_r(k; x).
	\end{align}
	We prove the claim by induction on $n \ge 0$. It is clear from the definition of $(b_{r, j} (n;x))_{j=0}^{r-1}$ that the claim holds for $0 \le n \le r-1$. 
	
	Next, let $n \ge 0$, and assume that the claim holds for all $0 \le k \le n$. Then, by \eqref{eq:Dr-rec} and the induction hypothesis, we have
	\begin{align*}
	D_r (n+r;x) & = x\sum_{k=0}^n \binom{n}{k} D_r (k;x) = x\sum_{k=0}^n \binom{n}{k} \sum_{j=0}^{r-1} b_{r, j} (k;x) D_r (j;x) \\
		& = \sum_{j=0}^{r-1} \left( x\sum_{k=0}^n \binom{n}{k} b_{r, j} (k;x) \right) D_r (j;x) = \sum_{j=0}^{r-1} b_{r, j} (n+r;x) D_r (j;x).
	\end{align*}
	This completes the proof.
\end{proof}

Combining this with \cref{thm:Drx-Q-indep} specialized at $x=1$, we obtain \cref{Dobinski with r}.

\begin{remark}
	For $r=1$ and $j=0$, the polynomial $b_{1,0}(n; x)$ serves as the generating function of the \textit{Stirling number of the second kind}, that is,
	\[
		b_{1,0}(n; x) = \sum_{k=0}^n \stirlingII{n}{k} x^k,
	\]
	(see \cite[Proposition 2.6 (4)]{AIK2014} and OEIS~\cite[\href{https://oeis.org/A008277}{A008277}]{OEIS}).
\end{remark}

For a non-zero rational number $x$, we define 
\[
	D_{r,\calA}(n; x) \coloneqq (D_r^{(p)}(n; x) \bmod{p})_p \in \calA,
\]
as a generalization of \eqref{def:DrA}. 

\begin{theorem}[A generalization of \cref{analogue of Dobinski with r}]
	For any integers $r \ge 1$ and $n \ge 0$, we have
	\[
		D_{r,\calA} (n; x) = \sum_{j=0}^{r-1} b_{r,j}(n; x) D_{r, \calA}(j; x) + g_r(n; x),
	\]
	where the sequence $(g_r(n; x))_{n \ge 0}$ is given by the same recurrence relation
	\[
		g_r(n + r; x) = x\sum_{k = 0}^n\binom{n}{k}g_r(k; x)\quad (n\ge 1)
	\]
	as \eqref{def:brj-x-rec}, with the initial values $g_r(n; x) = (-1)^{r - 1} x \delta_{n,r}$ for $0\le n\le r$.
\end{theorem}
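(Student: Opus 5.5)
The plan is to specialize \cref{lem:DrN-rec} to $N=p$, reduce modulo $p$, and then run the same induction as in the proof of the generalization of \cref{Dobinski with r}, while keeping track of an inhomogeneous error term. Throughout, $x\in\bbQ^\times$ is fixed. We discard the finitely many primes dividing the numerator or denominator of $x$; this is harmless in $\calA$.

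First, put $N=p$ in \cref{lem:DrN-rec}:
\[
	D_r^{(p)}(n+r;x) = x\sum_{k=0}^n \binom{n}{k} D_r^{(p)}(k;x) - \frac{p^n x^p}{((p-1)!)^r}.
\]
The boundary term is $p$-integral for the remaining primes. When $n\ge 1$ it is divisible by $p$, so it vanishes modulo $p$. When $n=0$ we use Fermat's little theorem $x^p\equiv x \pmod p$ and Wilson's theorem $(p-1)!\equiv -1\pmod p$, which give
\[
	\frac{x^p}{((p-1)!)^r}\equiv (-1)^r x \pmod p .
\]
Passing to $\calA$, we obtain
\[
	D_{r,\calA}(r;x) = x D_{r,\calA}(0;x) + (-1)^{r-1}x ,
\]
and, for $n\ge1$,
\[
	D_{r,\calA}(n+r;x) = x\sum_{k=0}^n\binom{n}{k}D_{r,\calA}(k;x).
\]

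Next, define the discrepancy
\[
	e(n)\coloneqq D_{r,\calA}(n;x)-\sum_{j=0}^{r-1}b_{r,j}(n;x)D_{r,\calA}(j;x),
\]
and show that $e(n)=g_r(n;x)$ for all $n\ge 0$. For $0\le n\le r-1$ we have $e(n)=0$, since $b_{r,j}(n;x)=\delta_{j,n}$. For $n=r$, the recurrence \eqref{def:brj-x-rec} with $n=0$ gives $b_{r,j}(r;x)=x\,\delta_{j,0}$. The first displayed relation above then yields $e(r)=(-1)^{r-1}x$. These match the initial values $g_r(n;x)=(-1)^{r-1}x\delta_{n,r}$ for $0\le n\le r$.

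For $n\ge1$, both $D_{r,\calA}(\cdot\,;x)$ and each $b_{r,j}(\cdot\,;x)$ satisfy the same homogeneous linear recurrence $u(n+r)=x\sum_{k=0}^n\binom nk u(k)$. By linearity, so does $e$. Hence $e$ and $g_r(\cdot\,;x)$ satisfy the same recurrence for $n\ge1$ and agree for $0\le n\le r$. Strong induction on $n$, as in the proof of the generalization of \cref{Dobinski with r}, then gives $e(n)=g_r(n;x)$ for all $n$, which is the claim.

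The only delicate step is the $n=0$ case, which breaks the homogeneous recurrence and produces the inhomogeneity through Wilson's theorem. One must also check that this case is correctly absorbed: it appears as the initial value $g_r(r;x)$, and the recurrence for $g_r$ is imposed only for $n\ge1$. Everything else is formal.
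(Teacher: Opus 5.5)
Your proposal is correct and follows essentially the same route as the paper. Both set $N=p$ in \cref{lem:DrN-rec}, use Fermat and Wilson to reduce the boundary term to $(-1)^{r-1}x\,\delta_{n,0}$ modulo $p$, check the cases $0\le n\le r$ directly, and then induct using the homogeneous recurrence for $n\ge 1$. Packaging that induction through the discrepancy $e(n)$ and linearity, rather than substituting the inductive hypothesis directly as the paper does, is only a cosmetic difference.
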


\begin{proof}
	For a large enough prime $p$, by \cref{lem:DrN-rec}, we have
	\begin{align}\label{eq:DrA-rec}
	\begin{split}
		D_r^{(p)}(n+r; x) &= x \sum_{k=0}^n \binom{n}{k} D_r^{(p)}(k; x) - \frac{p^n x^p}{((p-1)!)^r}\\
			&\equiv x \sum_{k=0}^n \binom{n}{k} D_r^{(p)}(k; x) + (-1)^{r-1} x \delta_{n,0} \pmod{p}.
	\end{split}
	\end{align}
	
	For $0 \le n < r$, the claim is clear. For $n=r$, since
	\[
		D_r^{(p)}(r; x) \equiv x D_r^{(p)}(0; x) + (-1)^{r-1}x \pmod{p}
	\]
	and $b_{r,j}(r; x) = x b_{r,j}(0; x) = x \delta_{j,0}$, we have 
	\[
		D_{r,\calA}(r; x) = \sum_{j=0}^{r-1} b_{r,j}(r;x) D_{r,\calA}(j, x) + g_r(r; x).
	\]
	
	Next, let $n \ge 1$ and assume that the claim holds for all integers $l$ with $0\le l\le n$. The recursion \eqref{eq:DrA-rec} and the definitions of $b_{r,j}$ and $g_r$ yield that
  \begin{align*}
    D_{r,\calA}(n+r;x)
    &= x \sum_{l=0}^{n}\binom{n}{l}D_{r,\calA}(l;x) \\
    &= x\sum_{l=0}^{n}\binom{n}{l}\left(\sum_{j=0}^{r-1} b_{r,j}(l;x)D_{r,\calA}(j;x)+g_r(l;x)\right) \\
    &= \sum_{j=0}^{r-1}\left(x\sum_{l=0}^{n}\binom{n}{l}b_{r,j}(l;x)\right)D_{r,\calA}(j;x)
      + x\sum_{l=0}^{n}\binom{n}{l}g_r(l;x).
  \end{align*}
  Thus, the claim holds for all $n\ge 0$.
\end{proof}


\section{Finite analogues of Euler's constant}\label{KMSw/parameters}

\subsection{Gregory polynomials}

As a generalization of the Gregory coefficients defined in \eqref{def:Gregory-coeff}, we define the \textit{Gregory polynomials} $G_n(x)$ by the generating function
\begin{align}\label{def:Gre-poly}
	\frac{t(1+t)^x}{\log(1+t)} = \sum_{n=0}^{\infty} G_n(x) t^n.
\end{align}
By definition, we have $G_n(0) = G_n$, and the first few examples are given by $G_0(x) = 1, G_1(x) = x+1/2$, and
\[
	G_2(x) = \frac{6x^2-1}{12}, \quad G_3(x) = \frac{4x^3-6x^2+1}{24}, \quad G_4(x) = \frac{30x^4 - 120x^3 + 120x^2 - 19}{720}.
\]
Independent studies of these polynomials were initiated by several authors around the 1920s. For example, Appell~\cite{Appell1926} investigated $G_n(x) - G_n$ under the name \textit{Fontana--Bessel polynomials}, while Jordan~\cite{Jordan1929} studied $G_n(x)$ under the name \textit{Bernoulli polynomials of the second kind}. For a detailed historical account, see Blagouchine~\cite[p.~16]{Blagouchine2018}.

Blagouchine~\cite{Blagouchine2018} summarizes several known results under the notation
\begin{align}\label{eq:Nnk-unit}
	N_{n,k}(x) \coloneqq \sum_{j=0}^{k-1} G_n(x+j).
\end{align}
Applying these results to the case $k=1$ yields results concerning $G_n(x)$. For example, in~\cite[(46)]{Blagouchine2018} we have
\begin{align}\label{eq:Gre-poly-exp}
\begin{split}
	G_n(x) &= \int_x^{x+1} \binom{u}{n} \dd u \quad (n \ge 0)\\
		&= \frac{(-1)^n}{n!} \sum_{j=1}^n \frac{(-1)^j}{j+1} \stirlingone{n}{j} ((x+1)^{j+1} - x^{j+1}) \quad (n \ge 1),
\end{split}
\end{align}
where $\stirlingone{\cdot}{\cdot}$ denotes the \textit{Stirling numbers of the first kind}, defined by
\[
	\frac{(\log(1+t))^j}{j!} = \sum_{n=j}^\infty (-1)^{n-j} \stirlingone{n}{j} \frac{t^n}{n!}.
\]
From this integral expression, noting the recurrence satisfied by binomial coefficients, we obtain
\begin{align}\label{eq:binom-rec}
	G_n(x) + G_{n-1}(x) = G_n(x+1).
\end{align}
Moreover, as recorded in~\cite[(61)]{Blagouchine2018}, for $x > -1$, we have the asymptotic formula
\begin{align}\label{eq:Nnk-asymp}
	G_n(x) \sim \frac{(-1)^{n+1}}{\pi n^{x+1} \log n} \left( \sin(\pi x) \Gamma(x+1) + \frac{\pi \cos(\pi x) \Gamma(x+1) + \sin(\pi x) \Gamma(x+1) \Psi(x+1)}{\log n} \right)
\end{align}
as $n \to \infty$, where $\Psi(x) \coloneqq \Gamma'(x)/\Gamma(x)$ is the \textit{digamma function}.

Blagouchine further presents the following identity concerning Euler's constant.

\begin{lemma}[{Blagouchine~\cite[(96), (101)]{Blagouchine2018}}]\label{lem:Bla-101}
	For an integer $k > 0$ and a real number $x > -1$, we have
	\[
		\gamma = \frac{1}{k} \sum_{n=1}^\infty \frac{(-1)^{n-1} N_{n,k}(x)}{n} - \frac{1}{k} \sum_{j=1}^k \log(x+j).
	\]
\end{lemma}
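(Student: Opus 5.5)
The identity can be proved directly from Mascheroni-type arguments using the integral representation \eqref{eq:Gre-poly-exp}, rather than only quoting Blagouchine. First I reduce to $k=1$. Since $N_{n,k}(x)=\sum_{j=0}^{k-1}G_n(x+j)$, it suffices to prove
\[
	\gamma = \sum_{n=1}^\infty \frac{(-1)^{n-1} G_n(y)}{n} - \log(y+1) \quad (y>-1)
\]
for every real $y>-1$. Applying this with $y=x+j$ for $j=0,\dots,k-1$, summing, and dividing by $k$ gives the statement, because $\sum_{j=0}^{k-1}\log(x+j+1)=\sum_{j=1}^k\log(x+j)$. Each series converges by itself, by the asymptotic \eqref{eq:Nnk-asymp}: the terms are $O(n^{-y-2}/\log n)$ with alternating signs, so the finite sum over $j$ may be interchanged with the sum over $n$.

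For the case $k=1$, I integrate the generating function \eqref{def:Gre-poly}. Dividing by $t$ and subtracting the constant term $G_0(y)=1$ gives
\[
	\sum_{n\ge1} G_n(y) t^{n-1} = \frac{(1+t)^y}{\log(1+t)} - \frac1t .
\]
Substituting $t\mapsto -t$ and integrating over $t\in[0,1]$, I would aim for
\[
	\sum_{n\ge1}\frac{(-1)^{n-1}G_n(y)}{n} = \int_0^1\Bigl(\frac1t + \frac{(1-t)^y}{\log(1-t)}\Bigr)\dd t .
\]
Two points need care here: the singularity at $t=1$ is integrable only because $y>-1$, and exchanging sum and integral at the endpoint needs Abel's theorem. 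Abel's theorem applies because the coefficient series converges by \eqref{eq:Nnk-asymp}.

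Next I evaluate the integral. Put $1-t=e^{-s}$, so $\log(1-t)=-s$, $\dd t=e^{-s}\dd s$ and $t=1-e^{-s}$. The integral becomes
\[
	\int_0^\infty\Bigl(\frac{e^{-s}}{1-e^{-s}}-\frac{e^{-(y+1)s}}{s}\Bigr)\dd s .
\]
Writing $\frac{e^{-s}}{1-e^{-s}}=\frac{1}{e^s-1}$, this splits as
\[
	\int_0^\infty\Bigl(\frac1{e^s-1}-\frac{e^{-s}}{s}\Bigr)\dd s + \int_0^\infty\frac{e^{-s}-e^{-(y+1)s}}{s}\dd s .
\]
The first integral is the classical representation of $\gamma$. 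The second is a Frullani integral, equal to $\log(y+1)$. Hence the series equals $\gamma+\log(y+1)$, which is the $k=1$ identity.

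The main obstacle is the justification at the endpoint $t=1$: convergence of the coefficient series, uniformly in the relevant range, and the termwise integration. I would handle this with Abel's theorem together with \eqref{eq:Nnk-asymp}. For $-1<y\le 0$ the decay is slow, and I would lean on the alternating-sign structure of the terms. Splitting the integral into two pieces must also be done before any limit is taken, so that each piece converges separately.
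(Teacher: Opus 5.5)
Your proof is correct, and it takes a genuinely different route from the paper. The paper gives no argument for \cref{lem:Bla-101}; it simply quotes Blagouchine's formulas (96) and (101). Your reduction to $k=1$ via \eqref{eq:Nnk-unit} is exactly the reduction the paper itself remarks on after the lemma. What you add is a self-contained proof of the case $k=1$: termwise integration of $\frac{1}{t}+\frac{(1-t)^y}{\log(1-t)}$ over $[0,u]$ with $u<1$, Abel's theorem as $u\to 1^-$, the substitution $1-t=e^{-s}$, the classical integral $\gamma=\int_0^\infty\bigl(\frac{1}{e^s-1}-\frac{e^{-s}}{s}\bigr)\dd s$, and Frullani's integral giving $\log(y+1)$. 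All of these steps check out. In the $s$-variable, the hypothesis $y>-1$ is precisely what makes $e^{-(y+1)s}/s$ integrable at infinity. Both pieces of your split converge absolutely, so the order of splitting and taking limits is not an issue. The citation keeps the paper short, whereas your route makes the lemma independent of Blagouchine except for the convergence of $\sum_n(-1)^{n-1}G_n(y)/n$. Even that input can be made elementary: inserting the uniform bound $|\binom{u}{n}|\ll n^{-u-1}\le n^{-y-1}$ for $u\in[y,y+1]$ into $G_n(y)=\int_y^{y+1}\binom{u}{n}\dd u$ gives $|G_n(y)|\ll n^{-y-1}$ without appealing to the finer asymptotic \eqref{eq:Nnk-asymp}.

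One correction to your convergence discussion: the terms $(-1)^{n-1}G_n(y)/n$ are not alternating. In \eqref{eq:Nnk-asymp} the factor $(-1)^{n+1}$ cancels against $(-1)^{n-1}$, so the terms are eventually of constant sign. For example, at $y=0$ Mascheroni's series is $\sum_n |G_n|/n$, with positive terms. So an alternating-series argument would not be available. It is also unnecessary: the bound $O(n^{-y-2}/\log n)$ with $y+2>1$ already gives absolute convergence for every $y>-1$. The range $-1<y\le 0$ therefore poses no extra difficulty, and no uniformity in $y$ is needed, because Abel's theorem is applied at a single fixed $y$.
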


In particular, for $k=1$ and $x=0$, we obtain
\[
	\gamma = \sum_{n=1}^\infty \frac{(-1)^{n-1} G_n}{n},
\]
which is the classical formula due to Mascheroni. In what follows, we first generalize \cref{lem:Bla-101} to a Kluyver-type expression. For the results of Mascheroni and Kluyver, see~\cite{KMS2025}. Note that the case $x=0$ in the following proposition corresponds to Kluyver's formula. Furthermore, since \eqref{eq:Nnk-unit} holds, the general case for arbitrary $k$ can be reduced to the case $k=1$. Therefore, it suffices to consider the Gregory polynomials $G_n(x)$ for $k=1$.

\begin{proposition}\label{prpo:Mascheroni-x}
	For an integer $m \ge 0$ and a real number $x > -1$, we have
	\[
		\gamma = m! \sum_{n=1}^\infty \frac{(-1)^{n-1} G_{n}(x)}{(n)_{m+1}} + H_m - \log(x+m+1),
	\]
	where $H_m = \sum_{j=1}^m 1/j$ denotes the $m$-th harmonic number with $H_0 \coloneqq 0$, and $(n)_{m+1} = n(n+1) \cdots (n+m)$ is the rising factorial.
\end{proposition}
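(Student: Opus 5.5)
The plan is to argue by induction on $m$, proving the statement for all real $x > -1$ simultaneously at each stage. The base case $m = 0$ is exactly \cref{lem:Bla-101} with $k = 1$, since $N_{n,1}(x) = G_n(x)$, $(n)_1 = n$ and $H_0 = 0$. The two tools for the inductive step are the shift relation \eqref{eq:binom-rec}, $G_n(x+1) = G_n(x) + G_{n-1}(x)$, and the partial fraction identity
\[
	\frac{1}{(n)_{m+2}} = \frac{1}{m+1}\left(\frac{1}{(n)_{m+1}} - \frac{1}{(n+1)_{m+1}}\right).
\]

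First I would settle convergence once and for all. By the asymptotic formula \eqref{eq:Nnk-asymp}, $G_n(x) = O\bigl(n^{-x-1}(\log n)^{-1}\bigr)$ for $x > -1$. This holds also at integer $x$, where the leading sine term vanishes and the bound is even better. Hence $\sum_n |G_n(x)|/(n)_{m+1}$ and $\sum_n |G_n(x)|/(n+1)_{m+1}$ converge, since $x + 2 > 1$. So every series below converges absolutely and may be split, combined and reindexed freely.

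For the inductive step, assume the claim for $m$ and every $x > -1$. I apply it at $x+1$, which is still greater than $-1$, and substitute $G_n(x+1) = G_n(x) + G_{n-1}(x)$. The part involving $G_{n-1}(x)$ is handled as follows.

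\begin{itemize}
	\item Its $n = 1$ term is $G_0(x)/(1)_{m+1} = 1/(m+1)!$.
	\item After the shift $n \mapsto n+1$, the remaining terms become $-\sum_{n \ge 1} (-1)^{n-1} G_n(x)/(n+1)_{m+1}$.
\end{itemize}

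Combining this with the hypothesis at $x$ itself gives
\[
	m!\sum_{n=1}^\infty \frac{(-1)^{n-1}G_n(x)}{(n+1)_{m+1}} = \frac{1}{m+1} + \log(x+m+1) - \log(x+m+2).
\]
Now multiply the partial fraction identity by $(m+1)!$. This expresses $(m+1)!\sum (-1)^{n-1}G_n(x)/(n)_{m+2}$ as the hypothesis sum at $x$ minus the sum just computed. The result is
\[
	\gamma - H_m + \log(x+m+1) - \frac{1}{m+1} - \log(x+m+1) + \log(x+m+2) = \gamma - H_{m+1} + \log(x+m+2),
\]
which is the claim for $m+1$.

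The computations are routine. The one point requiring care is justifying the termwise manipulations, namely splitting the series after using \eqref{eq:binom-rec} and shifting the index. This is exactly where the absolute-convergence bound from \eqref{eq:Nnk-asymp} is used, uniformly for $x > -1$. It is also why the induction has to run over all $x > -1$ at once rather than for a fixed $x$.
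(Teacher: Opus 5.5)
Your argument is correct and is essentially the paper's proof. The paper combines the partial-fraction identity with $G_n(x)+G_{n-1}(x)=G_n(x+1)$ to get the descent $m!\sum_{n\ge1}\frac{(-1)^{n-1}G_n(x)}{(n)_{m+1}} = (m-1)!\sum_{n\ge1}\frac{(-1)^{n-1}G_n(x+1)}{(n)_m} - \frac{1}{m}$, and iterates it down to \cref{lem:Bla-101} at the single point $x+m$; in your induction the hypothesis at $x$ enters and then cancels, leaving exactly this recursion (and \eqref{eq:Nnk-asymp} is only needed pointwise in $x$, not uniformly).
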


\begin{proof}
	The case $m=0$ is nothing but \cref{lem:Bla-101} for $k=1$. Hence, we assume $m > 0$ in what follows. First, note that
	\begin{align}\label{eq:telescope}
		\frac{1}{(n)_{m+1}} = \frac{1}{m} \left(\frac{1}{(n)_m} - \frac{1}{(n+1)_m} \right).
	\end{align}
	Using this, the infinite sum on the right-hand side can be rewritten as
	\begin{align*}
		\sum_{n=1}^\infty \frac{(-1)^{n-1} G_n(x)}{(n)_{m+1}} &= - \frac{1}{m} \frac{G_0(x)}{(1)_m} + \frac{1}{m} \sum_{n=1}^\infty \frac{(-1)^{n-1}}{(n)_m} (G_n(x) + G_{n-1}(x))\\
			&= -\frac{1}{m \cdot m!} + \frac{1}{m} \sum_{n=1}^\infty \frac{(-1)^{n-1} G_n(x+1)}{(n)_m},
	\end{align*}
	where the interchange of summations is justified by \eqref{eq:Nnk-asymp}, and the last equality uses~\eqref{eq:binom-rec}. Repeating this argument $m-1$ more times, we obtain
	\begin{align*}
		\sum_{n=1}^\infty \frac{(-1)^{n-1} G_n(x)}{(n)_{m+1}} = \cdots = -\frac{1}{m!} H_m + \frac{1}{m!} \sum_{n=1}^\infty \frac{(-1)^{n-1} G_n(x+m)}{n}.
	\end{align*}
	Finally, applying \cref{lem:Bla-101} with $k=1$ to the last sum gives the claimed formula.
\end{proof}

\subsection{Generalizations of Kaneko--Matsusaka--Seki's results}

As a generalization of Kaneko--Matsusaka--Seki's finite analogues of Euler's constant, the goal here is to introduce the finite analogues $\gamma_\calA^\mathrm{M}(x)$ and $\gamma_\calA^{\mathrm{K}, m}(x)$, and to show that they also differ from $\gamma_\calA^\mathrm{W}$ (defined in~\eqref{def:Wilson-gamma}) only by $\bbQ$-linear combinations of special values of $\log_\calA$ and $1$.

\begin{definition}
	For positive integers $m, k$ and any $x \in \bbQ$, we define
	\[
		\gamma_\mathcal{A}^{\mathrm{M}}(x) \coloneqq \left( \sum_{n=1}^{p-2} \frac{(-1)^{n-1} G_n(x)}{n} \bmod{p} \right)_p \in \mathcal{A}
	\]
	and 
	\[
		\gamma_\mathcal{A}^{\mathrm{K}, m}(x) \coloneqq \left(m!\sum_{n=1}^{p-m-1} \frac{(-1)^{n-1} G_n(x)}{(n)_{m+1}} \bmod{p}\right)_p + H_m - \ell_\calA(x+m+1) \in \mathcal{A}.
	\]
	In particular, the case $x=0$ corresponds the $\calA$-analogues of Euler's constants introduced in~\cite{KMS2025}.
\end{definition}

We investigate their properties in parallel with~\cite{KMS2025}. The introduction of the variable $x$ simplifies several arguments and provides benefits beyond a straightforward generalization.

\subsubsection{An analogue of $\gamma$ (Mascheroni), revisited}

For $x \in \bbQ^\times$, we define the \textit{Fermat quotient} by
\[
	q_p(x) \coloneqq \frac{x^{p-1} -1}{p}.
\]
We then set $\ell_\calA(x) \coloneqq (xq_p(x) \bmod{p})_p = x \log_\calA(x) \in \calA$. It is worth noting that $\ell_\calA(0) = 0$ holds.

\begin{theorem}[{A generalization of \cite[Theorem 2.4]{KMS2025}}]\label{thm:Mascheroni}
	For any $x \in \bbQ$, we have
	\[
		\gamma_\mathcal{A}^\mathrm{M}(x) = \gamma_\mathcal{A}^\mathrm{W} + \ell_\mathcal{A}(x+2) - \ell_\mathcal{A}(x+1) +\delta_{-1}(x) - 1,
	\]
	where $\delta_{-1}(x)$ denotes the indicator function of $\{-1\}$. In particular, we have
	\[
		\gamma_\calA^\mathrm{M}(-1) = \gamma_\calA^\mathrm{W}.
	\]
\end{theorem}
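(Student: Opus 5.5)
The plan is to work prime by prime: I would compute the $p$-component of $\gamma_\calA^\mathrm{M}(x)$ modulo $p$ for all sufficiently large $p$, starting from the integral representation \eqref{eq:Gre-poly-exp}, $G_n(x)=\int_x^{x+1}\binom{u}{n}\,\dd u$. Summing under the integral gives
\[
	\sum_{n=1}^{p-2}\frac{(-1)^{n-1}G_n(x)}{n}=\int_x^{x+1}F_p(u)\,\dd u-\frac{(-1)^{p-2}G_{p-1}(x)}{p-1},
	\qquad F_p(u)\coloneqq\sum_{n=1}^{p-1}\frac{(-1)^{n-1}}{n}\binom{u}{n}.
\]
The term $n=p-1$ has to be split off separately: $G_{p-1}(x)$ has a factor $1/p$ in its denominator, so it is not $p$-integral, but it is precisely where the Wilson quotient will come from.

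For the first piece, I would use the classical identity $F_p(u)=H_u$ for integers $0\le u\le p-1$, together with $p\,H_u\equiv \text{stuff}$, to establish the $p$-integral polynomial congruence
\[
	F_p(u)\equiv \frac{1-u^p-(1-u)^p}{p}\pmod{p}
\]
as an identity of polynomials in $\bbZ_{(p)}[u]$. One checks that both sides have degree at most $p$ and agree (mod $p$) at $u=0,1,\dots,p-1$. If needed, I would instead compare coefficients directly via $\binom{p}{n}/p\equiv(-1)^{n-1}/n$. Integrating from $x$ to $x+1$ then produces expressions such as $\bigl((x+1)^{p+1}-x^{p+1}\bigr)/(p(p+1))$ and its analogue for $1-u$, whose integral runs from $-x$ to $1-x$. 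After rewriting $y^{p+1}=y^2\bigl(1+p\,q_p(y)\bigr)$ for $y\neq 0$, these become combinations of $\ell_\calA(x+1)$, $\ell_\calA(x)$, $\ell_\calA(1-x)$ and so on, plus rational constants.

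For the second piece, I would take the $j=p-1$ and $j=p-2$ terms of the Stirling expansion in \eqref{eq:Gre-poly-exp}. These are the only terms carrying $1/p$ or $1/(p-1)!$ singularities, and $\stirlingone{p-1}{p-1}=1$. The contribution is then $\bigl((x+1)^p-x^p\bigr)/(p\,(p-1)!)$ plus a $p$-integral remainder. Using $(p-1)!\equiv -1+p\,\gamma^\mathrm{W}\pmod{p^2}$ and $(x+1)^p-x^p\equiv 1\pmod p$, this yields $\gamma_\calA^\mathrm{W}$ together with further Fermat-quotient terms. I expect the main obstacle to be the bookkeeping at order $p$. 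All the individually non-integral pieces, namely the $1/p$ parts of the integrated polynomial and of $G_{p-1}(x)$, must cancel exactly. The surviving Fermat quotients must then collapse, via the functional equation of $\log_\calA$ and the relation $(x+1)^p\equiv x+1$, to the single combination $\ell_\calA(x+2)-\ell_\calA(x+1)-1$.

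To reduce this bookkeeping, I would first prove the formula for one convenient value of $x$, for instance $x=-1$, where the claim reads $\gamma_\calA^\mathrm{M}(-1)=\gamma_\calA^\mathrm{W}$. Then I would propagate it using \eqref{eq:binom-rec}, which gives
\[
	\gamma_\calA^\mathrm{M}(x+1)-\gamma_\calA^\mathrm{M}(x)=\Bigl(\sum_{n=1}^{p-2}\tfrac{(-1)^{n-1}G_{n-1}(x)}{n}\Bigr)_p .
\]
That difference should be an easier sum to evaluate. Finally, the $\delta_{-1}(x)$ term reflects the convention $\ell_\calA(0)=0$: when $x+1=0$ one cannot write $0^{p+1}=0^2(1+p\,q_p(0))$, and the constant that this would normally contribute must be handled separately.
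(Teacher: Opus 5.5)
Your proposal has a genuine gap, and it sits in the step that carries the whole computation.

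\textbf{The key congruence is false.} The claim $F_p(u)\equiv\frac{1-u^p-(1-u)^p}{p}\pmod p$ fails already at $u=1$: the right-hand side is $0$, while $F_p(1)=H_1=1$. The right-hand side is $\equiv\sum_{k=1}^{p-1}u^k/k$, a series in monomials, whereas $F_p$ is a series in the basis $\binom un$. Comparing coefficients via $\binom pn/p\equiv(-1)^{n-1}/n$ and Vandermonde actually gives $F_p(u)\equiv\frac1p\bigl(\binom{u+p}{p}-1-\binom up\bigr)$, which does not integrate into Fermat quotients.

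\textbf{No mod-$p$ congruence for $F_p$ can be integrated this way.} Since $\deg F_p=p-1$ and $\int_x^{x+1}u^{p-1}\,\dd u=\bigl((x+1)^p-x^p\bigr)/p$, an error $p\,s\,u^{p-1}$ in $F_p$ becomes an error $\equiv s\pmod p$ after integration. So you would need the leading coefficient of $F_p$ modulo $p^2$. That coefficient is $-1/\bigl((p-1)\,(p-1)!\bigr)\equiv-1-p-p\,w_p\pmod{p^2}$ with $w_p=((p-1)!+1)/p$, i.e.\ exactly the Wilson-quotient information. Indeed, the $n=p-1$ summand of $\int_x^{x+1}F_p(u)\,\dd u$ is precisely $-G_{p-1}(x)/(p-1)$, so your second piece cancels identically against part of the first. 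Whatever Wilson quotient you extract from $G_{p-1}(x)$ is offset by the same quantity hidden in the top coefficient of $F_p$, which reducing modulo $p$ before integrating cannot see.

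\textbf{The reduction to $x=-1$ does not cover the statement.} Proving $x=-1$ and propagating with \eqref{eq:binom-rec} only reaches $x\in-1+\bbZ$, whereas the theorem is stated for all $x\in\bbQ$. Non-integral $x$ would still need a direct evaluation, and the difference sum itself still has to be computed.

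\textbf{The source of $\delta_{-1}(x)$ is misidentified.} The identity $y^{p+1}=y^2(1+p\,q_p(y))$ does hold at $y=0$, since $q_p(0)=-1/p$. The indicator actually comes from $\binom{x}{p-1}\equiv1-(x+1)^{p-1}\pmod p$.

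\textbf{How the paper avoids this.} It does not integrate at all. Multiplying \eqref{def:Gre-poly} by $\log(1+t)$ gives the exact identity $\sum_{n=1}^{k-1}\frac{(-1)^{n-1}G_n(x)}{k-n}=(-1)^k\binom{x}{k-1}+\frac1k$. At $k=p$, using $\frac1n\equiv-\frac1{p-n}$, your sum becomes $\binom{x}{p-1}-\frac1p-G_{p-1}(x)$, with every singular term explicit. Then $G_{p-1}(x)-\frac1{p!}$ is computed from \eqref{eq:Gre-poly-exp} via $\stirlingone{p-1}{j}\equiv1$, Wilson's theorem and Eisenstein's congruence. Finally, $\gamma_\calA^\mathrm{W}$ comes out of $-\frac1p-\frac1{p!}=-\frac{(p-1)!+1}{p\,(p-1)!}$.
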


To prove this theorem, we first prepare the following lemma.

\begin{lemma}\label{lem:binom-cong}
	Let $x \neq -1$ be a rational number. For a sufficiently large prime $p$, we have 
	\[
		\binom{x}{p-1} \equiv \delta_{-1}(x) \pmod{p}.
	\]
\end{lemma}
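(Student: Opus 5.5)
We would prove the congruence directly from the product formula
\[
	\binom{x}{p-1} = \frac{1}{(p-1)!}\prod_{j=0}^{p-2}(x-j),
\]
working in the ring $\bbZ_{(p)}$ of $p$-integral rationals. Write $x = a/b$ in lowest terms with $b \ge 1$. The plan is to show that for large $p$ the numerator product is divisible by $p$, while $(p-1)!$ is a $p$-adic unit. The latter is immediate, and indeed $(p-1)! \equiv -1 \pmod p$ by Wilson's theorem.

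First we take $p$ large enough that $p \nmid b$ and $p \nmid (a+b)$. This is possible because $x \ne -1$ forces $a+b \neq 0$. Then $x \in \bbZ_{(p)}$, so $x$ reduces to a residue $c \in \{0,1,\dots,p-1\}$ with $x \equiv c \pmod p$.

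The key point is that $c \neq p-1$. Indeed, $c = p-1$ would mean $x+1 = (a+b)/b \equiv 0 \pmod p$, which is excluded by the choice of $p$. Hence $0 \le c \le p-2$, so $j=c$ is one of the indices in the product. The factor $x-c$ then lies in $p\bbZ_{(p)}$, while all other factors lie in $\bbZ_{(p)}$. It follows that $\prod_{j=0}^{p-2}(x-j) \equiv 0 \pmod p$, and dividing by the unit $(p-1)!$ gives $\binom{x}{p-1} \equiv 0 = \delta_{-1}(x) \pmod p$.

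For completeness, in view of the later use in \cref{thm:Mascheroni} where $x=-1$ also appears, we would add the case $x=-1$. There one has exactly $\binom{-1}{p-1} = (-1)^{p-1} = 1 = \delta_{-1}(-1)$ for odd $p$, so the congruence holds for every $x \in \bbQ$.

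The only delicate step is making sure that a factor divisible by $p$ actually occurs among the $x-j$ with $0 \le j \le p-2$. This is precisely where the hypothesis $x \neq -1$ and the largeness of $p$ (so that $p \nmid a+b$) enter. Everything else is a routine $p$-integrality check.
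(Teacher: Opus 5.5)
Your proposal is correct and follows the paper's argument. For large $p$, $x$ is $p$-integral with $x\not\equiv -1 \pmod p$, so some factor $x-j$ with $0\le j\le p-2$ is divisible by $p$ while $(p-1)!$ is a unit; the case $x=-1$ is the direct computation $\binom{-1}{p-1}=(-1)^{p-1}=1$. You just make explicit the $p$-integrality details that the paper's two-line proof leaves implicit.
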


\begin{proof}
	If $x=-1$, then we have $\binom{-1}{p-1} \equiv 1 \pmod{p}$ for odd prime $p$. If $x \neq -1$, then for sufficiently large primes $p$ we have $x \not\equiv -1 \pmod{p}$, which implies that $\binom{x}{p-1} \equiv 0 \pmod{p}$.
\end{proof}

%

\begin{proof}[Proof of \cref{thm:Mascheroni}]
	Multiplying both sides of the generating function~\eqref{def:Gre-poly} by $\log(1+t)$ and comparing the coefficients in the power series expansion at $t=0$, we obtain
	\[
		\sum_{n=1}^{k-1} \frac{(-1)^{n-1} G_n(x)}{k-n} = (-1)^k \binom{x}{k-1} + \frac{1}{k}.
	\]
	Now, for a sufficiently large prime $p$, setting $k=p$, we have
	\begin{align}\label{eq:Masche-L}
		\sum_{n=1}^{p-2} \frac{(-1)^{n-1} G_n(x)}{n} &\equiv -\sum_{n=1}^{p-1} \frac{(-1)^{n-1} G_n(x)}{p-n} - G_{p-1}(x) \nonumber \\
			&\equiv \binom{x}{p-1} - \frac{1}{p} - G_{p-1}(x) \nonumber \\
			&\equiv - \frac{1}{p} - G_{p-1}(x) + \delta_{-1}(x) \pmod{p},
	\end{align}
	where we applied \cref{lem:binom-cong}.
	
	On the other hand, by \eqref{eq:Gre-poly-exp}, we have
	\[
		G_{p-1}(x) = \frac{1}{(p-1)!} \sum_{j=1}^{p-2} \frac{(-1)^j}{j+1} \stirlingone{p-1}{j} ((x+1)^{j+1} - x^{j+1}) + \frac{(x+1)^p - x^p}{p!}.
	\]
	As in the proof of~\cite[Theorem 2.4]{KMS2025}, by applying $\stirlingone{p-1}{j} \equiv 1 \pmod{p}$ for $j \ge 1$, the identity
	\[
		\frac{(x+1)^p - x^p}{p!} = \frac{1}{(p-1)!} \left((x+1) q_p(x+1) - xq_p(x) + \frac{1}{p} \right),
	\]
	and Wilson's theorem, it follows that
	\[
		G_{p-1}(x) - \frac{1}{p!} \equiv -\sum_{j=1}^{p-2} \frac{(-1)^j}{j+1} ((x+1)^{j+1} - x^{j+1}) - (x+1)q_p(x+1) + xq_p(x) \pmod{p}.
	\]
	Furthermore, applying Eisenstein's congruence~\cite{Eisenstein1850}
	\[
		\sum_{m=1}^{p-1} \frac{(-1)^{m-1} x^m}{m} \equiv (x+1) q_p(x+1) - xq_p(x) \pmod{p},
	\]
	we obtain
	\begin{align}\label{eq:Masche-R}
		G_{p-1}(x) - \frac{1}{p!} \equiv - (x+2) q_p(x+2) + (x+1) q_p(x+1) +1 \pmod{p}.
	\end{align}
	
	Combining \eqref{eq:Masche-L} and \eqref{eq:Masche-R}, we finally obtain
	\begin{align*}
		\sum_{n=1}^{p-2} \frac{(-1)^{n-1} G_n(x)}{n} &\equiv -G_{p-1}(x) + \frac{1}{p!} - \frac{(p-1)!+1}{p (p-1)!} + \delta_{-1}(x)\\
			&\equiv (x+2) q_p(x+2) - (x+1) q_p(x+1) + \frac{(p-1)! +1}{p} + \delta_{-1}(x)-1 \pmod{p},
	\end{align*}
	which implies the desired result.
\end{proof}

\subsubsection{An interlude (Gregory), revisited}

As a generalization of $G_\calA(k)$ introduced in~\cite{KMS2025}, we introduce the following.

\begin{definition}
	For $k \ge 2$ and $x \in \bbQ$, we define $G_\calA(k; x) \in \calA$ by
	\[
		G_\calA(k; x) \coloneqq (G_{p-k}(x) \bmod{p})_p.
	\]
\end{definition}

The following lemma simplifies the proof of \cite[Theorem 3.2]{KMS2025}.

\begin{lemma}\label{lem:Gregory-shift}
	For any integers $n, N \ge 0$, we have
	\[
		G_n(x) = (-1)^N \sum_{j=0}^N (-1)^j \binom{N}{j} G_{n+N}(x+j).
	\]
\end{lemma}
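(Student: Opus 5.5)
The plan is to iterate the basic shift relation \eqref{eq:binom-rec}, written as $G_n(x) = G_{n+1}(x+1) - G_{n+1}(x)$, which follows by replacing $n$ with $n+1$ in $G_{n+1}(x) + G_n(x) = G_{n+1}(x+1)$. Introduce the forward difference operator $\Delta f(x) \coloneqq f(x+1) - f(x)$. With this notation the relation says $G_n(x) = \Delta G_{n+1}(x)$, where $\Delta$ acts on the variable $x$ while the index is raised by one. Applying this $N$ times (induction on $N$) gives
\[
	G_n(x) = \Delta^N G_{n+N}(x).
\]
The base case $N=0$ is trivial. The inductive step applies the relation to $G_{n+N}$ inside the operator, i.e. $G_{n+N}(y) = \Delta G_{n+N+1}(y)$ for all $y$, and uses that $\Delta$ is linear.

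Next, expand $\Delta^N$ by the binomial theorem. Write $\Delta = S - I$ with $S$ the shift $f(x) \mapsto f(x+1)$; since $S$ and $I$ commute,
\[
	\Delta^N f(x) = \sum_{j=0}^N \binom{N}{j} (-1)^{N-j} f(x+j).
\]
With $f = G_{n+N}$ this is exactly $(-1)^N \sum_{j=0}^N (-1)^j \binom{N}{j} G_{n+N}(x+j)$, as claimed.

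Alternatively, one can argue through the generating function. Summing the right-hand side against $t^n$, and using that $\sum_j (-1)^{N-j}\binom{N}{j}(1+t)^{x+j} = (1+t)^x t^N$, the generating function of the right-hand side is $t^{-N}\cdot t^N \cdot t(1+t)^x/\log(1+t)$, restricted to coefficients of index $\ge N$. This also confirms the identity.

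There is no real obstacle. The only point needing care is keeping the index shift and the argument shift aligned in the induction: the relation is applied with $n+N$ in place of $n$ and must hold for every argument $x+j$. This is fine because \eqref{eq:binom-rec} is a polynomial identity valid for all $x$ and all $n \ge 1$, and here it is used with index $n+N+1 \ge 1$.
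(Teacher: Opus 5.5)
Your proof is correct and is essentially the paper's argument: the paper also proceeds by induction on $N$ using \eqref{eq:binom-rec}. Writing the induction as $G_n(x)=\Delta^N G_{n+N}(x)$ and then expanding $(S-I)^N$ binomially makes explicit the ``simplifying'' step (Pascal's rule) that the paper leaves to the reader, and your generating-function check (extracting the coefficient of $t^{n+N}$ from $t^N\cdot t(1+t)^x/\log(1+t)$) is also valid.
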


\begin{proof}
	By induction on $N$. Indeed, the case $N=0$ is clear. Assuming the statement holds for $N$, the case $N+1$ follows by applying \eqref{eq:binom-rec} and simplifying.
\end{proof}

\begin{theorem}[{A generalization of \cite[Theorem 3.2]{KMS2025}}]\label{thm:Interlude}
	For $k \ge 2$ and a rational number $x$, we have
	\[
		G_\calA(k; x) = (-1)^{k-1} \sum_{j=0}^k (-1)^j \binom{k}{j} \ell_\calA(x+j+1)
	\]
\end{theorem}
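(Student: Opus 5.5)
The plan is to reduce everything to the single congruence \eqref{eq:Masche-R} for $G_{p-1}$, which was established in the proof of \cref{thm:Mascheroni}, by shifting the index from $p-k$ up to $p-1$ with \cref{lem:Gregory-shift}. Fix $k \ge 2$ and $x \in \bbQ$, and let $p$ be a sufficiently large prime, so that all rationals $x+j$ ($0 \le j \le k+1$) are $p$-integral. Apply \cref{lem:Gregory-shift} with $n = p-k$ and $N = k-1$, so that $n+N = p-1$:
\[
	G_{p-k}(x) = (-1)^{k-1} \sum_{j=0}^{k-1} (-1)^j \binom{k-1}{j} G_{p-1}(x+j).
\]
This identity holds exactly in $\bbQ$.

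Next I would use the sum $\sum_{j=0}^{k-1} (-1)^j \binom{k-1}{j} = 0$, valid because $k \ge 2$. It allows replacing each $G_{p-1}(x+j)$ by $G_{p-1}(x+j) - \frac{1}{p!} - 1$ without changing the sum. This step matters: $G_{p-1}(y)$ itself is not $p$-integral, but $G_{p-1}(y) - 1/p!$ is. Only after this replacement can we reduce modulo $p$ and invoke \eqref{eq:Masche-R} with $y = x+j$. Writing $f(y) \coloneqq (y+1) q_p(y+1)$, that congruence reads
\[
	G_{p-1}(y) - \frac{1}{p!} - 1 \equiv f(y) - f(y+1) \pmod{p}.
\]

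Substituting, we get
\[
	G_{p-k}(x) \equiv (-1)^{k-1} \sum_{j=0}^{k-1} (-1)^j \binom{k-1}{j} \bigl(f(x+j) - f(x+j+1)\bigr) \pmod{p}.
\]
Shift the index in the second part ($j \mapsto j-1$) and combine the two sums using Pascal's rule $\binom{k-1}{j} + \binom{k-1}{j-1} = \binom{k}{j}$. This turns the right-hand side into $(-1)^{k-1}\sum_{j=0}^{k} (-1)^j \binom{k}{j} f(x+j)$. Since $f(x+j) \bmod p$ is the $p$-component of $\ell_\calA(x+j+1)$, this is exactly the claimed identity in $\calA$.

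The main obstacle is checking that \eqref{eq:Masche-R} is valid for every rational argument $y = x+j$, including the degenerate cases where $y+1 = 0$ or $y+2 = 0$. In those cases the expression $y q_p(y)$ must be read as $0$, consistent with $\ell_\calA(0) = 0$. I would recheck the derivation of \eqref{eq:Masche-R}, which goes through \eqref{eq:Gre-poly-exp}, the identity for $((x+1)^p - x^p)/p!$, and Eisenstein's congruence. Each of these remains valid at $y = 0$ with the convention $0 \cdot q_p(0) = 0$, because $0^p/p = 0$ exactly. The only other requirement is the $p$-integrality of all $x+j$, which holds for large $p$.
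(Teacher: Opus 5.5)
Your proposal is correct and follows the paper's proof: apply \cref{lem:Gregory-shift} with $n=p-k$ and $N=k-1$, substitute \eqref{eq:Masche-R}, and recombine the two sums with Pascal's rule. Your explicit use of $\sum_{j=0}^{k-1}(-1)^j\binom{k-1}{j}=0$ for $k\ge 2$, which cancels the non-$p$-integral $1/p!$ and the constant $1$, is exactly the step the paper leaves implicit. Your worry about degenerate arguments is unnecessary: since $y\,q_p(y)=(y^p-y)/p$ is a polynomial in $y$, \eqref{eq:Masche-R} is a polynomial congruence valid for every $p$-integral $y$.
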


\begin{proof}
	It immediately follows from applying \cref{lem:Gregory-shift} for $n = p-k$ and $N = k-1$, together with \eqref{eq:Masche-R} that
  \[
    G_\calA(k; x) = (-1)^{k-1} \sum_{j=0}^{k-1} (-1)^j \binom{k-1}{j} (\ell_\calA(x+j+1) - \ell_\calA(x+j+2)).
  \]
  The claim follows by a simple change of variables and straightforward simplification.
\end{proof}

\subsubsection{Yet more variations (Kluyver), revisited}

Finally, we consider the Kluyver-type $\gamma_\calA^{\mathrm{K},m}(x)$. As in the proof of \cref{prpo:Mascheroni-x}, the proof of the following theorem reduces to \cref{thm:Mascheroni}.

\begin{theorem}[{A generalization of~\cite[Theorem 4.1]{KMS2025}}]
	For each $m \ge 1$ and a rational number $x$, we have
	\[
		\gamma_\calA^{\mathrm{K},m}(x) = \gamma_\calA^\mathrm{W} + \delta_{-1}(x+m) -1 + (H_m-1) \ell_\calA(x+m+1) + \sum_{j=0}^{m-1} (-1)^{m-j} \binom{m}{j} \frac{\ell_\calA(x+j+1)}{m-j}.
	\]
\end{theorem}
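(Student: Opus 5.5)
The plan is to mimic the proof of \cref{prpo:Mascheroni-x} at the level of each $p$-component, using the telescoping identity \eqref{eq:telescope} and the shift relation \eqref{eq:binom-rec}. At each step I will keep track of the finitely many boundary terms that the truncation at $n=p-m-1$ produces. After $m$ steps the main sum becomes the Mascheroni-type sum defining $\gamma_\calA^\mathrm{M}(x+m)$, which \cref{thm:Mascheroni} evaluates. The boundary terms are values $G_{p-k}(\,\cdot\,)$ with $k\ge 2$, i.e.\ elements $G_\calA(k;\cdot)$, which \cref{thm:Interlude} converts into $\bbQ$-linear combinations of $\ell_\calA$.

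Concretely, fix a large prime $p$ and, for $m\ge 1$, set
\[
S_m(x)\coloneqq\sum_{n=1}^{p-m-1}\frac{(-1)^{n-1}G_n(x)}{(n)_{m+1}}.
\]
Using \eqref{eq:telescope}, I split $S_m(x)$ into two sums and shift $n\mapsto n-1$ in the second. Then I combine $G_n(x)+G_{n-1}(x)=G_n(x+1)$ and compare with $S_{m-1}(x+1)$, whose range is $1\le n\le p-m$. This should give
\[
m\,S_m(x)= -\frac{1}{m!}+S_{m-1}(x+1)+\frac{(-1)^{p-m-1}\bigl(G_{p-m-1}(x)+G_{p-m}(x+1)\bigr)}{(p-m)_m}-(\text{correction}),
\]
with the signs and the exact boundary term to be double-checked. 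Modulo $p$ we have $(p-m)_m\equiv(-1)^m m!$, so the boundary contribution is $\pm\frac{1}{m!}\bigl(G_\calA(m+1;x)+G_\calA(m;x+1)\bigr)$.

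The case $m=1$ must be handled separately. There the analogous computation produces $\sum_{n=1}^{p-2}(-1)^{n-1}G_n(x+1)/n$ exactly, which is the $p$-component of $\gamma_\calA^\mathrm{M}(x+1)$. The only boundary term is $G_{p-2}(x)/(p-1)$, which is $p$-integral, so the non-integral $G_{p-1}$ (which carries the $1/p!$) never appears. This is precisely why the truncation in the definition of $\gamma_\calA^{\mathrm{K},m}(x)$ stops at $p-m-1$.

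Iterating from $m$ down to $1$, the constants $-1/m!$ accumulate (after multiplying by $m!$) to $-H_m$, which cancels the $+H_m$ in the definition. We are left with
\[
\gamma_\calA^{\mathrm{K},m}(x)=\gamma_\calA^\mathrm{M}(x+m)-\ell_\calA(x+m+1)+\sum(\text{rational multiples of }G_\calA(k;x+i)).
\]
\cref{thm:Mascheroni} then gives $\gamma_\calA^\mathrm{M}(x+m)=\gamma_\calA^\mathrm{W}+\ell_\calA(x+m+2)-\ell_\calA(x+m+1)+\delta_{-1}(x+m)-1$. This accounts for the terms $\gamma_\calA^\mathrm{W}+\delta_{-1}(x+m)-1$ in the statement.

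The main obstacle is the final bookkeeping: rewriting each boundary $G_\calA(k;x+i)$ by \cref{thm:Interlude} as an alternating binomial sum of $\ell_\calA(x+i+j+1)$, then showing that everything collapses to $(H_m-1)\ell_\calA(x+m+1)+\sum_{j=0}^{m-1}(-1)^{m-j}\binom{m}{j}\frac{\ell_\calA(x+j+1)}{m-j}$, with the $\ell_\calA(x+m+2)$ terms cancelling. Rather than tracking a double sum directly, I would first pair consecutive boundary terms, which by \eqref{eq:binom-rec} with indices $p-m-1,p-m$ form a single shifted difference. Then I would verify the coefficient identity by induction on $m$, checking the small cases $m=1,2$ numerically against the stated formula to pin down the signs.
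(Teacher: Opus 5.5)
Your plan is essentially the paper's proof: you telescope with \eqref{eq:telescope} and \eqref{eq:binom-rec} down to $\gamma_\calA^{\mathrm{M}}(x+m)$ and evaluate the boundary terms by \cref{thm:Mascheroni} and \cref{thm:Interlude}, and your separate $m=1$ step is just the paper's combination $G_{p-1}(x+m)-G_{p-1}(x+m-1)=G_{p-2}(x+m-1)$ done in advance. When you double-check, the boundary pair is a difference, $G_{p-m-1}(x)-G_{p-m}(x+1)=-G_{p-m}(x)$, with no further correction, so each level $k\ge 2$ contributes only $-G_\calA(k;x+m-k)/k$, and the final coefficient comparison reduces to the hockey-stick identity $\sum_{j=s}^{m}\frac{1}{j}\binom{j}{s}=\frac{1}{s}\binom{m}{s}$.
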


\begin{proof}
	Let $p$ be a sufficiently large prime. By \eqref{eq:telescope}, we have
	\begin{align*}
		m! \sum_{n=1}^{p-m-1} \frac{(-1)^{n-1} G_n(x)}{(n)_{m+1}} &= (m-1)! \sum_{n=1}^{p-m-1} \left(\frac{1}{(n)_m} - \frac{1}{(n+1)_m}\right) (-1)^{n-1} G_n(x)\\
			&= (m-1)! \left(\sum_{n=1}^{p-m} \frac{(-1)^{n-1}}{(n)_m} (G_n(x) + G_{n-1}(x)) - \frac{(-1)^m G_{p-m}(x)}{(p-m)_m} - \frac{1}{m!}\right)\\
			&= (m-1)! \sum_{n=1}^{p-m} \frac{(-1)^{n-1} G_n(x+1)}{(n)_m} + \frac{(-1)^{m-1} (m-1)!G_{p-m}(x)}{(p-m)_m} - \frac{1}{m},
	\end{align*}
	where we applied \eqref{eq:binom-rec}.
	Iterating this argument yields the following:
	\begin{align*}
		m! \sum_{n=1}^{p-m-1} &\frac{(-1)^{n-1} G_n(x)}{(n)_{m+1}}\\ 
		&\equiv \cdots \equiv \sum_{n=1}^{p-1} \frac{(-1)^{n-1} G_n(x+m)}{n} + \sum_{j=1}^m \frac{(-1)^{j-1} (j-1)!G_{p-j}(x+m-j)}{(p-j)_j} - H_m\\
			&\equiv \sum_{n=1}^{p-2} \frac{(-1)^{n-1} G_n(x+m)}{n} + G_{p-2}(x+m-1) - \sum_{j=2}^m \frac{G_{p-j}(x+m-j)}{j} - H_m \pmod{p},
	\end{align*}
	where we use $G_{p-1}(x+m) - G_{p-1}(x+m-1) = G_{p-2}(x+m-1)$ and $(p-j)_j \equiv (-1)^j j! \pmod{p}$. Therefore, by \cref{thm:Mascheroni} and \cref{thm:Interlude}, we have
	\begin{align*}
		\gamma_\calA^{\mathrm{K},m}(x) &= \gamma_\calA^\mathrm{M}(x+m) + G_\calA(2; x+m-1) - \sum_{j=2}^m \frac{G_\calA(j; x+m-j)}{j} - \ell_\calA(x+m+1)\\
			&= \gamma_\calA^\mathrm{W} + \delta_{-1}(x+m) - 1 - \ell_\calA(x+m) - \sum_{j=2}^m \frac{(-1)^{j-1}}{j} \sum_{l=0}^j (-1)^l \binom{j}{l} \ell_\calA(x+m-j+l+1)
	\end{align*}
	A straightforward simplification of this expansion yields the desired result.
\end{proof}

\begin{remark}
	In the theorems of this section, expressions of the form $\ell_\calA(x) - \ell_\calA(x-1)$ appear frequently. This coincides with the $\calA$-analogue of another type of logarithmic function introduced by Kaneko--Zagier~\cite[Example 3]{KanekoZagier}, namely
	\[
		L_1(x) = \left(-\sum_{n=1}^{p-1} \frac{(1-x)^n}{n} \bmod{p}\right)_p = x \log_\calA(x) - (x-1) \log_\calA(x-1).
	\]
\end{remark}

\bibliographystyle{plainurl}
\bibliography{references}

\end{document}